\documentclass[11pt,a4paper]{amsart}

\usepackage{amsmath,amssymb,amsthm,amsfonts,mathtools}
\usepackage{geometry}
\usepackage{setspace}
\usepackage{enumitem}
\usepackage{xcolor}

\usepackage[colorlinks=true, citecolor=violet, linkcolor=blue, urlcolor=purple, bookmarks=false]{hyperref}
\usepackage{cleveref}
\crefformat{enumi}{(#2#1#3)}

\newcommand{\citeSta}[1]{\cite[\href{https://stacks.math.columbia.edu/tag/#1}{#1}]{stacks-project}}

\theoremstyle{plain}
\newtheorem{theorem}{Theorem}[section]
\newtheorem{lemma}[theorem]{Lemma}
\newtheorem{proposition}[theorem]{Proposition}
\newtheorem{corollary}[theorem]{Corollary}

\theoremstyle{definition}
\newtheorem{definition}[theorem]{Definition}
\newtheorem{remark}[theorem]{Remark}
\newtheorem{example}[theorem]{Example}
\newtheorem{notation}[theorem]{Notation}

\crefname{theorem}{Theorem}{Theorems}
\crefname{lemma}{Lemma}{Lemmas}
\crefname{proposition}{Proposition}{Propositions}
\crefname{corollary}{Corollary}{Corollaries}
\crefname{definition}{Definition}{Definitions}
\crefname{remark}{Remark}{Remarks}
\crefname{example}{Example}{Examples}
\crefname{notation}{Notation}{Notations}

\AddToHook{env/lemma/begin}{\crefalias{theorem}{lemma}}
\AddToHook{env/proposition/begin}{\crefalias{theorem}{proposition}}
\AddToHook{env/corollary/begin}{\crefalias{theorem}{corollary}}
\AddToHook{env/fact/begin}{\crefalias{theorem}{fact}}
\AddToHook{env/key/begin}{\crefalias{theorem}{key}}
\AddToHook{env/setting/begin}{\crefalias{theorem}{setting}}
\AddToHook{env/claim/begin}{\crefalias{theorem}{claim}}
\AddToHook{env/addendum/begin}{\crefalias{theorem}{addendum}}
\AddToHook{env/conjecture/begin}{\crefalias{theorem}{conjecture}}
\AddToHook{env/definition/begin}{\crefalias{theorem}{definition}}
\AddToHook{env/remark/begin}{\crefalias{theorem}{remark}}
\AddToHook{env/construction/begin}{\crefalias{theorem}{construction}}
\AddToHook{env/observation/begin}{\crefalias{theorem}{observation}}
\AddToHook{env/warning/begin}{\crefalias{theorem}{warning}}
\AddToHook{env/Notation/begin}{\crefalias{theorem}{Notation}}
\AddToHook{env/discussion/begin}{\crefalias{theorem}{discussion}}
\AddToHook{env/example/begin}{\crefalias{theorem}{example}}
\AddToHook{env/slogan/begin}{\crefalias{theorem}{slogan}}
\AddToHook{env/notation/begin}{\crefalias{theorem}{notation}}

\setlist[itemize]{labelsep=5pt}
\setlist[enumerate]{labelsep=5pt}
\setlist[description]{labelsep=11pt}

\newlist{enumalphp}{enumerate}{5}
\setlist[enumalphp]{label=(\alph*),labelsep=5pt}

\newcommand{\defeq}{\coloneqq}

\DeclareMathOperator{\pd}{pd}
\DeclareMathOperator{\Gpd}{Gpd}

\DeclareMathOperator{\Ext}{Ext}
\DeclareMathOperator{\Tor}{Tor}
\DeclareMathOperator{\Hom}{Hom}
\DeclareMathOperator{\depth}{depth}

\DeclareMathOperator{\egrade}{E.grade}

\DeclareMathOperator{\fdim}{fdim}
\DeclareMathOperator{\idim}{idim}
\DeclareMathOperator{\perf}{perf}

\DeclareMathOperator*{\colim}{colim}
\DeclareMathOperator{\Kos}{Kos}

\newcommand{\mcalD}{\mathcal{D}}

\newcommand{\mfrakm}{\mathfrak{m}}

\newcommand{\setZ}{\mathbb{Z}}

\newcommand{\setF}{\mathbb{F}}

\newcommand{\set}[2]{\{#1\mid#2\mbox{}\}}

\numberwithin{equation}{section}

\title[Homological Detection by Perfectoid Algebras]{Homological Detection by Perfectoid Algebras}

\author[M. Asgharzadeh]{Mohsen Asgharzadeh}
\address{Mohsen Asgharzadeh, Hakimiyeh, Tehran, Iran.}
\email{mohsenasgharzadeh@gmail.com}

\author[R. Ishizuka]{Ryo Ishizuka}
\address{Department of Mathematics, Institute of Science Tokyo, 2-12-1 Ookayama, Meguro, Tokyo 152-8551}
\email{ishizuka.r.ac@m.titech.ac.jp}

\thanks{2020 {\em Mathematics Subject Classification\/}: 14G45; 13D07; 13A35; 13B22.}

\keywords{absolute integral closure; perfect closure; injective dimension; projective dimension; Gorenstein and regular rings; perfectoid algebras; vanishing of Ext.}

\begin{document}

	\begin{abstract}
		We first establish estimates for the projective and injective dimensions for quotients of perfectoid algebras by radical ideals. 
		As applications, we obtain homological characterizations of modules of finite injective dimension, (big) Cohen--Macaulay modules, Gorenstein local rings, and regular local rings in mixed characteristic in terms of perfectoid algebras. These are mixed characteristic analogues of the corresponding characterizations via Frobenius morphisms in positive characteristic.
	\end{abstract}

	\maketitle 
	
	\tableofcontents
	\section{Introduction}
	Let \(p\) be a prime number.
	Perfect and perfectoid algebras have become fundamental objects in modern commutative algebra.
	Besides their importance in $p$-adic geometry, they have proved remarkably effective in studying not only homological conjectures of Noetherian rings but also singularity theory in mixed characteristic.
	
	Since perfectoid rings in characteristic \(p\) are precisely perfect \(\setF_p\)-algebra, we can think perfectoid algebra as a mixed characteristic analogue of perfect rings, or more philosophically Frobenius morphisms.
	One of the most influential developments in this direction is the work of Bhatt, Iyengar and Ma \cite{bhatt2019Regular}, who showed that perfectoid algebras detect regularity through flat dimension.
	Their work can be viewed as a mixed-characteristic analogue of classical results of Kunz (\cite{kunz1969Characterizations}) and Peskine--Szpiro (\cite{peskine1973dimension}) involving the Frobenius endomorphism in positive characteristic.
	
	The success of perfectoid methods naturally raises the question of whether they can also detect
	other homological properties.
	Although flat dimension has been extensively studied in this context, much less is known about injective dimension, denoted by \(\idim_R(-)\),  and related dual homological invariants.
	The purpose of this paper is to develop such a theory.
	
	Our starting point is a finite projective and injective dimensions estimate for quotients of suitable perfectoid algebras.
	This presents a slight modification of \cite[Theorem 3.1]{aberbach1997Finite} and \cite[Lemma 3.2]{bhatt2019Regular}, in which they primarily treat flat dimension in the prime characteristic setting.
	See \Cref{RemarkProjDim} for a more detailed historical account of these results.
	
	\begin{corollary}[Special case of \Cref{TheoFinPdim} and \Cref{TheoFinIdim}]
		Let \(R\) be a Noetherian ring and let \(A\) be a perfectoid \(R\)-algebra.
		For any ideal \(I\) of \(R\)  such that \(p \in I\) and \(I\) is generated by \(r\) elements up to radical.
		Then
		\begin{enumerate}
			\item The projective dimension \(\pd_A (A/\sqrt{IA})\) is less than or equal to \(2(r+1)\).
			\item If \(\idim_R(A)\) is finite, then \(\idim_R(A/\sqrt{IA}) \leq \idim_R(A)\).
		\end{enumerate}
	\end{corollary}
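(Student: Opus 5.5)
The plan is to reduce both parts to a statement about colimits of Koszul complexes on elements that admit compatible systems of \(p\)-power roots in \(A\), and then use that perfectoid rings make these colimits acyclic in positive degrees.

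\textbf{Step 1: radical generators with \(p\)-power roots.} Write \(\sqrt{I}=\sqrt{(p,f_1,\dots,f_r)}\). Since \(p\in IA\), the quotient \(A/\sqrt{IA}\) is a quotient of the perfect ring \(A/\sqrt{pA}\cong A^\flat/\sqrt{p^\flat A^\flat}\). Choose \(\varpi^\flat\in A^\flat\) with \(\varpi^{\flat\sharp}\) a unit multiple of \(p\) up to radical, and lifts \(g_i^\flat\in A^\flat\) of the images of the \(f_i\). The elements \(g_0=\varpi^{\flat\sharp}\) and \(g_i=g_i^{\flat\sharp}\) then have compatible systems of \(p\)-power roots \(g_i^{1/p^n}\). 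Moreover
\[
\sqrt{IA}=\sqrt{(g_0,\dots,g_r)A}=\bigcup_n (g_0^{1/p^n},\dots,g_r^{1/p^n})A,
\]
where the last equality uses perfectness of \(A/\sqrt{pA}\) to extract roots modulo the radical.

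\textbf{Step 2: the key lemma.} Let \(K_n=\Kos(g_0^{1/p^n},\dots,g_r^{1/p^n};A)\), with transition maps given by the standard Koszul maps (multiplication by \(g_i^{1/p^n-1/p^{n+1}}\) in the appropriate slots). I want to prove that \(\colim_n K_n\simeq A/\sqrt{IA}\), i.e.\ that \(\colim_n H_i(K_n)=0\) for \(i>0\).

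I would prove this by writing \(\colim_n K_n\) as the derived base change
\[
A\otimes^L_{\setZ[T_0^{1/p^\infty},\dots,T_r^{1/p^\infty}]}\setZ[T^{1/p^\infty}]/(T^{1/p^\infty}),
\]
using \(T_i\mapsto g_i\). Over the monoid algebra \(\setZ[T^{1/p^\infty}]\) the quotient \(\setZ[T^{1/p^\infty}]/(T^{1/p^\infty})\) is the colimit of the Koszul complexes, which are exact in positive degrees there. Tor-independence for \(A\) should then follow as in \cite[Lemma 3.2]{bhatt2019Regular}: reduce modulo \(p\) (everything is already \(p\)-torsion after inverting nothing, since \(g_0\) is among the generators), and in the perfect ring \(A/p\) use that Frobenius is bijective, so the relevant Tor's are \(p\)-divisible in the \(T\)-grading and die in the colimit. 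This step is where perfectoidness is really used, and I expect it to be the main obstacle.

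\textbf{Step 3: part (1).} Each \(K_n\) is a finite free complex of length \(r+1\). A filtered countable colimit of modules of projective dimension \(\le m\) has projective dimension \(\le m+1\), via the telescope sequence \(0\to\bigoplus K_n\to\bigoplus K_n\to\colim K_n\to 0\). Applied to the homotopy colimit, Step 2 already bounds \(\pd_A(A/\sqrt{IA})\) by \(r+2\). The weaker \(2(r+1)\) also follows by treating one generator at a time (each contributing \(\le 2\)) and iterating via base change to the perfectoid-like quotients. I would present whichever is cleaner, but the bound stated is certainly covered.

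\textbf{Step 4: part (2).} Let \(d=\idim_R(A)\). Since \(R\) is Noetherian, by Baer's criterion it suffices to show \(\Ext^{j}_R(R/\mathfrak q,A/\sqrt{IA})=0\) for all ideals \(\mathfrak q\) and all \(j>d\). Because \(R/\mathfrak q\) is finitely presented over Noetherian \(R\), \(\Ext_R(R/\mathfrak q,-)\) commutes with filtered (homotopy) colimits. So by Step 2 it suffices to show \(\Ext^{j}_R(R/\mathfrak q,K_n)=0\) for \(j>d\).

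For this, write \(K_n\) as an iterated cone of the multiplication maps \(g_i^{1/p^n}\) on \(R\)-complexes. For a single cone \(C=\mathrm{cone}(M\to M)\), the long exact sequence contains the segment
\[
\Ext^{j}_R(N,M)\to\Ext^{j}_R(N,C)\to\Ext^{j+1}_R(N,M).
\]
If the outer terms vanish for \(j>d\), so does the middle term, and induction on the number of cones gives the vanishing for \(K_n\). The cohomological shift of the cone goes in the favorable direction here, because \(K_n\) has homology in nonnegative homological degrees. Hence \(\idim_R(A/\sqrt{IA})\le d\).
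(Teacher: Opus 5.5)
Everything in your argument rests on Step 2, the claim \(\colim_n K_n\simeq A/\sqrt{IA}\), and this is exactly the step you leave unproved. The route you sketch for it does not work as written.

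In mixed characteristic, \(A/pA\) is in general only semiperfect, not perfect: for \(A=\widehat{\setZ_p[p^{1/p^\infty}]}\), Frobenius kills the nonzero class of \(p^{1/p}\). So there is no bijective Frobenius on \(A/p\) to exploit. Worse, each \(H_i(K_n)\) is killed by \(g_0^{1/p^n}\), hence by \(p\), so the homology of \(X\defeq\colim_n K_n\) is \(p\)-torsion. Consequently \(H_1(X\otimes^L_{\setZ}\setF_p)\) surjects onto \(H_0(X)[p]=A/\sqrt{IA}\), which is nonzero whenever \(\sqrt{IA}\neq A\). After a derived reduction mod \(p\) there is therefore no Tor-independence to prove, and you would be left computing over the semiperfect ring \(A/p\) anyway. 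The remark about \(p\)-divisibility in the \(T\)-grading does not replace an argument.

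The missing idea is to peel off \(g_0\) using reducedness of \(A\) rather than Frobenius.

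\emph{Peeling off \(g_0\).} Because \(A\) is reduced, the colimit of \(A\xrightarrow{g_0^{1/p-1/p^2}}A\xrightarrow{g_0^{1/p^2-1/p^3}}\cdots\) maps isomorphically onto \((g_0^{1/p^\infty})\), as in \Cref{LemFinPdim}. Hence \(\colim_n\Kos(g_0^{1/p^n};A)\simeq\bar A\defeq A/(g_0^{1/p^\infty})\).

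\emph{Choice of \(g_0\).} When \(g_0\) is a unit multiple of \(p\) with compatible \(p\)-power roots, \(\bar A\) is perfect and equals \(A/\sqrt{pA}\); this is the fact the paper takes from \cite[Lemma 3.9]{bhatt2018Integral}. You already need it for the equality \(\bigcup_n(g_0^{1/p^n},\dots,g_r^{1/p^n})A=\sqrt{IA}\) in Step 1, so take \(g_0\) to be an honest unit multiple of \(p\), not one ``up to radical''.

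\emph{Reducing to \(\bar A\).} Since \(\colim_n\Kos(g_1^{1/p^n},\dots,g_r^{1/p^n};A)\) is a bounded complex of flat \(A\)-modules, tensoring gives \(\colim_nK_n\simeq\colim_n\Kos(\bar{g}_1^{1/p^n},\dots,\bar{g}_r^{1/p^n};\bar A)\). You then conclude by induction on the number of generators, since each quotient \(\bar A/(\bar{g}_1^{1/p^\infty},\dots,\bar{g}_j^{1/p^\infty})\) is again perfect, hence reduced.

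With this repair, Steps 3 and 4 are correct. The telescope argument even gives \(\pd_A(A/\sqrt{IA})\le r+2\), which is sharper than \(2(r+1)\). Step 4 is the colimit argument of \Cref{TheoFinIdim} applied to the uniformly bounded \(K_n\).

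The paper uses the same two inputs, namely \Cref{LemFinPdim} and perfectness of the successive quotients, but never builds Koszul complexes over \(A\). Instead it works with the sequences \(0\to(x^{1/p^\infty})\to B\to B/(x^{1/p^\infty})\to 0\) over the successive perfect quotients \(B\), paying \(2\) per generator. As a result it needs roots of the \(f_i\) only modulo \((f_0^{1/p^\infty})\), and no tilting.
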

	While perfectoid rings are typically not Noetherian, this result suggests that certain homological invariants of these highly non-Noetherian rings can be effectively controlled and estimated.
	
	In a foundational work, dual to that of  \cite{peskine1973dimension}, Herzog \cite[Satz 5.2]{herzog1974Ringe} established the following celebrated criterion: if \(R\) is an \(F\)-finite local ring of characteristic \(p\) and \(M\) is a finitely generated \(R\)-module such that \(\Ext_R^i({}^e\!R, M) = 0\) for all \(i > 0\) and infinitely many \(e\), then \(M\) has finite injective dimension.
	This equivalence was strengthened by Takahashi--Yoshino (\cite[Theorem 4.5]{takahashi2004Characterizing}) in which they only require the vanishing for \(i \gg 0\) for a sufficiently large \(e > 0\).
	
	As a mixed-characteristic analogue, part of our result gives the following characterization by using transcendental extension instead of such finite components. While we only state the following results in terms of perfectoid algebra for simplicity, the perfectoid condition can be weakened (\Cref{GeneralPerfectoidCondition}).
	
	\begin{theorem} (\Cref{TheoEquivGor}). \label{MainTheoremInjDim}
		Let \((R, \mfrakm)\) be a Noetherian local ring with \(p \in \mfrakm\), and let \(M\) be a nonzero finitely generated or \(\mfrakm\)-adically complete \(R\)-module. Then the following are equivalent:
		\begin{enumerate}
			\item \(M\) has finite injective dimension as an \(R\)-module.
			\item \(\Ext_R^i((R/pR)_{\perf}, M) = 0\) for all \(i \gg 0\).
			\item There exists a perfectoid \(R\)-algebra \(A\) satisfying \(\mfrakm A \neq A\) and \(\Ext^i_R(A, M) = 0\) for \(i \gg 0\).
		\end{enumerate}
	\end{theorem}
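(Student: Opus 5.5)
We prove the cycle (1)\(\Rightarrow\)(2)\(\Rightarrow\)(3)\(\Rightarrow\)(1).

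\emph{(1)\(\Rightarrow\)(2).} Let \(d=\idim_R M<\infty\). We use the standard fact that \(\Ext^i_R(N,M)=0\) for all \(i>d\) and \emph{every} \(R\)-module \(N\). Indeed, an injective resolution of \(M\) of length \(d\) computes these Ext groups for arbitrary, not necessarily finitely generated, \(N\). Taking \(N=(R/pR)_{\perf}\) gives (2).

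\emph{(2)\(\Rightarrow\)(3).} We take \(A=(R/pR)_{\perf}\). This is a perfect \(\setF_p\)-algebra, hence perfectoid. Since \(R/pR\) is a nonzero local ring with maximal ideal \(\mfrakm/pR\), the colimit of Frobenius maps \(R/pR\to R/pR\to\cdots\) maps onto the colimit \((R/\mfrakm)_{\perf}\neq0\). Hence \(\mfrakm A\neq A\).

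\emph{(3)\(\Rightarrow\)(1).} This is the substantive direction. Let \(A\) be as in (3), with \(\Ext^i_R(A,M)=0\) for \(i>n\). The plan has four steps.

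\begin{enumerate}
\item Apply \Cref{TheoFinPdim} with \(I=\mfrakm\). This ideal contains \(p\) and is generated up to radical by \(r=\dim R+1\) elements (a system of parameters together with \(p\)). Set \(B=A/\sqrt{\mfrakm A}\). Then \(\pd_A B\le 2(r+1)\), and \(B\neq0\) because \(\mfrakm A\neq A\).
\item Take a finite projective resolution \(P_\bullet\to B\) over \(A\). Each \(P_j\) is a direct summand of a free \(A\)-module \(A^{(X)}\). Since \(\Ext^i_R(A^{(X)},M)=\prod\Ext^i_R(A,M)\), the vanishing for \(i>n\) passes to every \(P_j\). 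Dimension shifting along \(P_\bullet\) then gives \(\Ext^i_R(B,M)=0\) for \(i>n+2(r+1)\).
\item The ring \(B\) is reduced and killed by \(\mfrakm\), so it is a nonzero \(k\)-vector space, where \(k=R/\mfrakm\). Since \(\Ext_R\) converts direct sums into products, \(k\) is a direct summand of \(B\) as an \(R\)-module, and so \(\Ext^i_R(k,M)=0\) for \(i\gg0\).
\item If \(M\) is finitely generated, the vanishing \(\Ext^i_R(k,M)=0\) for \(i\gg0\) gives \(\idim_R M<\infty\) (Bass's theorem, via Bass numbers). If \(M\) is \(\mfrakm\)-adically complete, we would instead invoke the corresponding result for complete modules: the depth/width criterion of Foxby--Iyengar type, saying that finite injective dimension is detected by \(\Ext^{\gg0}_R(k,-)\) for \(\mfrakm\)-complete modules. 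Alternatively, we would reduce to derived-complete complexes using \(\mathrm{RHom}(k,M)\).
\end{enumerate}

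The main obstacle is the last step for \(\mfrakm\)-adically complete, non-finitely-generated \(M\). There the classical Bass criterion fails in general, and one must use that complete (equivalently, derived \(\mfrakm\)-complete, since \(R\) is Noetherian) modules have injective dimension computed by \(\sup\set{i}{\Ext^i_R(k,M)\neq0}\). That criterion is known for derived complete complexes via the derived Nakayama lemma and local duality. If only the complete case of step (4) were available in the paper in weaker form, the alternative would be to use \Cref{TheoFinIdim}, which gives \(\idim_R(A/\sqrt{\mfrakm A})\le\idim_R A\), dually to step (2). Step (2) itself is formal once \(\pd_A B<\infty\) is known.
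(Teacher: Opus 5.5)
Your proof is correct and follows the same route as the paper. You bound $\pd_A(A/\sqrt{\mfrakm A})$ via \Cref{TheoFinPdim}, pass the $\Ext$-vanishing from $A$ to $A/\sqrt{\mfrakm A}\cong k^{(\Lambda)}$ and hence to $k$, and conclude by Bass's criterion in the finitely generated case and Simon's criterion in the complete case (the result you describe). The one small difference is that you move the vanishing by dimension shifting along a finite $A$-projective resolution rather than the change-of-rings spectral sequence of \Cref{SpectralSequence}, which suffices for $i \gg 0$; also, your fallback remark about \Cref{TheoFinIdim} is unnecessary and would not apply, since it assumes $\idim_R(A)<\infty$.
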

	
	Notably, this characterization is new in characteristic $p$, and applies not only to finitely generated \(R\)-modules but also to complete modules. Also, the satisfaction of any one of these three equivalent conditions implies that $R$ is Cohen--Macaulay.
	
	Our next result provides a characterization of Gorenstein local rings in terms of perfectoid algebras by using the Gorenstein projective dimension $\Gpd_R(-)$ introduced by Enochs and Jenda (\cite{enochs1995Gorenstein}) as an extension of Gorenstein dimension to not finitely generated modules.
	\begin{corollary} (Special case of \Cref{EquivGorCharp}).
		Let \((R, \mfrakm)\) be a Noetherian local ring with \(p \in \mfrakm\). Then the following are equivalent:
		\begin{enumerate}
			\item \(R\) is Gorenstein.
			\item The vanishing \(\Ext_R^i((R/pR)_{\perf}, R) = 0\) holds for \(i \gg 0\).  
			\item There exists a perfectoid \(R\)-algebra \(A\) such that \(\mfrakm A \neq A\) and \(\Gpd_R(A) < \infty\). 
		\end{enumerate}
	\end{corollary}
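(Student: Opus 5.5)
The plan is to prove the cycle (1)\(\Rightarrow\)(2)\(\Rightarrow\)(3)\(\Rightarrow\)(1), using \Cref{MainTheoremInjDim} applied to \(M = R\) together with standard facts on Gorenstein projective dimension.

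\emph{(1)\(\Rightarrow\)(2).} If \(R\) is Gorenstein, then \(\idim_R(R) = \dim R < \infty\). The implication (1)\(\Rightarrow\)(2) of \Cref{MainTheoremInjDim}, applied to the finitely generated module \(M = R\), gives \(\Ext^i_R((R/pR)_{\perf}, R) = 0\) for \(i \gg 0\).

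\emph{(2)\(\Rightarrow\)(3).} I take \(A = (R/pR)_{\perf}\). This is a perfect \(\setF_p\)-algebra, hence perfectoid. It satisfies \(\mfrakm A \neq A\), because \((R/\mfrakm)_{\perf}\) is a nonzero quotient of \(A/\mfrakm A\).
\begin{itemize}
\item By (2)\(\Rightarrow\)(1) of \Cref{MainTheoremInjDim} with \(M=R\), we get \(\idim_R R < \infty\), so \(R\) is Gorenstein.
\item Over a Gorenstein local ring of Krull dimension \(d\), every module has \(\Gpd_R \le d\); this is the Enochs--Jenda result that Iwanaga--Gorenstein rings have finite global Gorenstein projective dimension.
\end{itemize}
Hence \(\Gpd_R(A) < \infty\).

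\emph{(3)\(\Rightarrow\)(1).} This is the substantive direction. Let \(n = \Gpd_R(A) < \infty\). Then \(A\) admits a resolution \(0 \to G_n \to \cdots \to G_0 \to A \to 0\) by Gorenstein projective modules. For Gorenstein projective \(G\) one has \(\Ext^i_R(G, P) = 0\) for all \(i>0\) and every projective \(P\); in particular this holds for \(P = R\). Dimension shifting along the resolution then gives \(\Ext^i_R(A, R) = 0\) for \(i > n\). Now \(A\) is perfectoid with \(\mfrakm A \neq A\), so condition (3) of \Cref{MainTheoremInjDim} holds for \(M = R\). That theorem then gives \(\idim_R R < \infty\), i.e. \(R\) is Gorenstein.

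The main obstacle is that everything is being pushed onto \Cref{MainTheoremInjDim}, specifically its (3)\(\Rightarrow\)(1) direction. If I had to prove that step myself, I would combine two inputs:
\begin{itemize}
\item the injective-dimension estimate \Cref{TheoFinIdim}, or rather its \(\Ext\)-vanishing analogue, to pass from vanishing of \(\Ext_R^{\gg0}(A,M)\) to vanishing of \(\Ext_R^{\gg0}(A/\sqrt{\mfrakm A},M)\);
\item the fact that \(A/\sqrt{\mfrakm A}\) is a nonzero module over which the residue field \(k\) can be detected, e.g. through a nonzero map to a perfect field extension of \(k\), which is a free \(k\)-module.
\end{itemize}
Together these yield \(\Ext^{i}_R(k,M)=0\) for \(i\gg0\), hence \(\idim_R M<\infty\).

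Beyond that, the only other nontrivial input is the finiteness of Gorenstein projective dimension over Gorenstein rings, which I quote from Enochs--Jenda.
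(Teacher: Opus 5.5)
Your proposal is correct and essentially matches the paper's proof. Both arguments reduce everything to \Cref{TheoEquivGor} with \(M=R\) and quote Enochs--Jenda for finiteness of \(\Gpd\) over a Gorenstein ring; for the step from \(\Gpd_R(A)<\infty\) to \(\Ext^i_R(A,R)=0\) for \(i>\Gpd_R(A)\), the paper cites \cite[Theorem 2.20]{holm2004Gorenstein}, and your dimension-shifting argument is the elementary half of that result. (Your side sketch of \Cref{TheoEquivGor} also works and is not needed: it uses a contravariant analogue of \Cref{TheoFinIdim} in place of \Cref{SpectralSequence}, and no map to a field extension is required, since \(A/\sqrt{\mfrakm A}\) is already a nonzero \(k\)-vector space.)
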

	
	In positive characteristic, Takahashi--Yoshino \cite[Theorem~6.2]{takahashi2004Characterizing} and also Iyengar--Sather-Wagstaff \cite[Theorem~A]{iyengar2004Gdimension} characterized Gorenstein local rings via the Gorenstein dimension of the Frobenius endomorphism.  Our results provide mixed-characteristic analogues of these characterizations, with the Frobenius endomorphism replaced by suitable perfectoid algebras.
	
	Using similar methods, the (balanced big) Cohen--Macaulayness of modules can be detected by the vanishing of \(\Ext\) in a suitable finite range, see~\Cref{TheoEquivCM} (\Cref{TheoEquivBCM}). This yields a characterization of a torsion-free module $M$ as a finite direct sum of the canonical module via the vanishing of $\Ext_R^i(A, M)$ for all $i \neq \dim(M)$; see \Cref{CorEquivcan}.
	

	In this introduction, we record the following result which is an injective-dimension analogue of finite-flat-dimensional criterion of Bhatt, Iyengar, and Ma \cite[Corollary 4.8 and Theorem 4.7]{bhatt2019Regular}.

	\begin{theorem}[\Cref{FinInjDimPerfRegular}]
		Let $(R,\mfrakm)$ be a Noetherian local ring with $p\in\mfrakm$. Then the following are equivalent:
		\begin{enumerate}
			\item $R$ is regular.
			\item There is a perfectoid $R$-algebra \(A\) such that $\idim_R(A)<\infty$ and \(\mfrakm A \neq A\).
			\item $R$ has an isolated singularity and there exists a perfectoid $R$-algebra $A$ such that \(\mfrakm A \neq A\) and \(\Ext_R^i(A,A)=0\) for all sufficiently large $i$.
		\end{enumerate}
	\end{theorem}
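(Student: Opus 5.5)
We would prove the cycle (1)\(\Rightarrow\)(2)\(\Rightarrow\)(1) together with (1)\(\Rightarrow\)(3)\(\Rightarrow\)(1), using the earlier results \Cref{TheoFinIdim} and \Cref{TheoEquivGor} as black boxes.

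\textbf{(1)\(\Rightarrow\)(2) and (1)\(\Rightarrow\)(3).} Suppose \(R\) is regular. Then every \(R\)-module has injective dimension at most \(\dim R\), so any \(A\) works for (2). The real task is to produce a perfectoid \(A\) with \(\mfrakm A\neq A\). We would take the \(p\)-adic completion of \(R^+\), or simply \((R/pR)_{\perf}\), which is perfect and hence perfectoid; we have \(\mfrakm(R/pR)_{\perf}\neq(R/pR)_{\perf}\) because \(R/pR\to (R/pR)_{\perf}\) is integral and injective modulo nilpotents. For (3), regularity gives an isolated singularity trivially. Moreover \(\Ext^i_R(A,A)=0\) for \(i>\dim R\), since \(\idim_R A\le\dim R\).

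\textbf{(2)\(\Rightarrow\)(1).} Let \(A\) be perfectoid with \(\mfrakm A\neq A\) and \(\idim_R A<\infty\). Choose a system of parameters \(x_1,\dots,x_d\) of \(R\) with \(x_1=p\) when possible, or otherwise the ideal \(\mfrakm\) itself, which contains \(p\) and is generated up to radical by \(d\) elements. By \Cref{TheoFinIdim}, \(\idim_R(A/\sqrt{\mfrakm A})\le\idim_R(A)<\infty\). The ring \(B=A/\sqrt{\mfrakm A}\) is a nonzero perfect \(\mfrakm\)-torsion \(k\)-algebra, where \(k=R/\mfrakm\), and it is a nonzero \(k\)-vector space. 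A direct sum of copies of \(k\) is a summand of \(B\), for instance \(k\to B\) splits as \(k\)-modules. Alternatively, \(\Ext_R^i(k,-)\) and \(\Ext_R^i(B,-)\) are compatible, so that \(\Ext_R^i(B,N)=\prod\Ext_R^i(k,N)\) for any \(R\)-module \(N\), since \(B\cong k^{(J)}\) as \(R\)-modules. Hence \(k\) is a direct summand of \(B\) as an \(R\)-module, so \(\idim_R k<\infty\). Now the theorem that a finitely generated module of finite injective dimension with finite length forces \(R\) Gorenstein, combined with the classical fact that \(\idim_R k<\infty\) iff \(R\) is regular (the Auslander--Buchsbaum--Serre theorem via \(\Ext_R^i(k,k)=\Tor\)-duality: \(\idim k<\infty\) gives \(\Ext^i_R(k,k)=0\) for large \(i\), and the Betti numbers of \(k\) equal its Bass numbers), yields that \(R\) is regular.

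\textbf{(3)\(\Rightarrow\)(1).} Here we would use \Cref{TheoEquivGor} in its strong form, where \(M\) may be \(\mfrakm\)-adically complete. We first replace \(A\) by its \(\mfrakm\)-adic or \(p\)-adic completion \(\widehat A\), which is still perfectoid. We would need \(\Ext^i_R(\widehat A,\widehat A)=0\) for \(i\gg0\); this transfer is the main obstacle, and we would try to handle it via derived completion and the isolated singularity hypothesis. By \Cref{TheoEquivGor} we then get \(\idim_R\widehat A<\infty\), and (2) applies. The isolated singularity assumption should enter as follows: on the punctured spectrum \(R_{\mathfrak p}\) is regular, so the \(\Ext\) modules \(\Ext^i_R(A,A)\) for \(i>\dim R\) are \(\mfrakm\)-torsion-like and controlled by local cohomology. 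This lets us pass from vanishing of \(\Ext_R^{\gg0}(A,A)\) to vanishing of \(\Ext_R^{\gg0}(A, E)\) or \(\Ext_R^{\gg 0}(k,A)\), either of which bounds \(\idim_R A\). We expect this reduction, which combines the isolated singularity with completeness to invoke \Cref{TheoEquivGor}, to be the hardest step.
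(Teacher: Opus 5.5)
\textbf{Parts that work.} Your arguments for (1)$\Rightarrow$(2), (1)$\Rightarrow$(3) and (2)$\Rightarrow$(1) are correct and follow the paper's route. For (2)$\Rightarrow$(1), you apply \Cref{TheoFinIdim} with $I=\mfrakm$, note that $A/\sqrt{\mfrakm A}\cong k^{(J)}$ with $J\neq\emptyset$ so that $k$ is an $R$-direct summand, and conclude from $\Ext^{\gg 0}_R(k,k)=0$. The paper does the same, phrasing the last step as boundedness of $R\Hom_R(k,k)$. One side remark: the ``theorem'' you quote, that a finite-length module of finite injective dimension forces $R$ to be Gorenstein, is false. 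Take $E_R(k)$ over an Artinian non-Gorenstein ring. You do not need it, since your Betti/Bass-number argument for $k$ already gives regularity.

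\textbf{The gap: (3)$\Rightarrow$(1).} You give a speculative plan rather than a proof, and the plan uses the isolated singularity at the wrong step. The missing idea is that \Cref{CorVanishing} (that is, \Cref{SpectralSequence} with $S=A$ and $J=\sqrt{\mfrakm A}$) holds for an \emph{arbitrary} $R$-module $M$. No finite generation or completeness is needed. Taking $M=A$, the hypothesis $\Ext^i_R(A,A)=0$ for $i\gg 0$ directly gives $\Ext^i_R(k,A)=0$ for $i\gg 0$, with no passage to $\widehat A$ and no appeal to \Cref{TheoEquivGor}.

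The isolated singularity is needed only at the next step. Because $A$ is not finitely generated, $\Ext^{\gg0}_R(k,A)=0$ does not by itself bound $\idim_R A$; your phrase ``either of which bounds $\idim_R A$'' glosses over exactly this point. Over a local ring with isolated singularity the bound does hold, by \cite[Theorem VI.9]{schoutens2003Projectivea}, as used in \Cref{CorVanishing}(2). Hence $\idim_R(A)<\infty$, and you are back in (2).

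\textbf{Why your proposed route does not work.} Three steps are unjustified.
\begin{enumerate}
\item The transfer from $\Ext^{\gg0}_R(A,A)=0$ to $\Ext^{\gg0}_R(\widehat A,\widehat A)=0$ is unproved.
\item You would also have to check that $\widehat A$ is still perfectoid and that $\mfrakm\widehat A\neq\widehat A$.
\item The heuristic that $\Ext^i_R(A,A)$ is ``$\mfrakm$-torsion-like'' because $R_{\mathfrak p}$ is regular on the punctured spectrum fails. The functor $\Ext^i_R(A,-)$ does not commute with localization when $A$ is not finitely generated, so regularity of $R_{\mathfrak p}$ gives no direct control of $\Ext^i_R(A,A)_{\mathfrak p}$.
\end{enumerate}
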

	We prove that, in several natural situations, the injective dimension of such an algebra 
	$A$ satisfies a dichotomy,
	$ 
	\idim_R(A)\in \{\dim(R), +\infty\},
	$ 
	(see \Cref{epa}). We further extend this to a more general setting, where the proof combines the preceding homological characterizations with techniques from the theory of solid algebras and non-vanishing results for the top local cohomology module 
	$H^d_{\mfrakm}(A)$ (see \Cref{embcm}).
	
	
	Some special cases of the present results appeared in \cite{asgharzadeh2026Perfect}. The current project, however, has a broader mixed-characteristic scope and settles an implicit conjecture from that paper---on the injective dimension of $R^+$---in both prime and mixed characteristics. Due to substantial changes in content and structure, we present this as a separate paper.
	
	\subsection*{Acknowledgments}
	R.I. would like to thank Olgur Celikbas for suggesting some of the questions investigated in this paper.
	R.I. is supported by JSPS KAKENHI Grant number 24KJ1085.
	\medskip
	\noindent
	
	\section{Projective and injective dimensions of perfectoid algebras}
	For a Noetherian local ring $(R,\mathfrak{m},k)$, let $\pd_R(-)$ and $\idim_R(-)$ denote the projective and injective dimensions, respectively. We refer to \cite{bhatt2019Regular} for the definition, foundational properties, and examples of perfectoid rings. In this section, we study modules of finite projective and injective dimension over perfectoid algebras and provide further characterizations of such modules. Applications are deferred to the next section.
	
	\begin{theorem} \label{SpectralSequence}
		Let $(R,\mfrakm,k)$ be a local ring, and let \(S\) be a nonzero \(R\)-algebra with a proper ideal \(J\) such that \(\mfrakm S \subseteq J\) and \(d \defeq \pd_S(S/J) < \infty\). Choose an integer \(s \in \setZ\).
		If an \(R\)-module \(M\) satisfies \(\Ext_R^i(S, M) = 0\) for \(i = s, \dots, s+d\), then \(\Ext_{R}^{s+d}(k, M) = 0\).
	\end{theorem}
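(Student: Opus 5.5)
The plan is to reduce the claim to the vanishing of $\Ext_R^{s+d}(S/J, M)$, and then obtain that vanishing by dimension shifting along a finite projective resolution of $S/J$ over $S$.

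First I use the hypothesis $\mfrakm S \subseteq J$. It says that the $R$-module $S/J$ is annihilated by $\mfrakm$, so $S/J$ is a $k$-vector space. It is nonzero because $J$ is proper. Hence $S/J \cong k^{(\Lambda)}$ as $R$-modules for some nonempty index set $\Lambda$, and $k$ is an $R$-direct summand of $S/J$. Since $\Ext_R^{i}(-,M)$ turns direct sums into products, $\Ext_R^{s+d}(k,M)$ is a direct summand of
\[
\Ext_R^{s+d}(S/J,M)\cong \prod_{\Lambda}\Ext_R^{s+d}(k,M).
\]
So it suffices to show $\Ext_R^{s+d}(S/J,M)=0$.

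Next I fix a projective resolution of $S/J$ over $S$ of length $d$:
\[
0\to P_d\to \cdots \to P_0 \to S/J\to 0 .
\]
Each $P_j$ is an $S$-direct summand of a free module $S^{(I_j)}$. Regarded as an $R$-module, $\Ext_R^i(S^{(I_j)},M)\cong\prod_{I_j}\Ext_R^i(S,M)$. Therefore the hypothesis gives $\Ext_R^i(P_j,M)=0$ for all $j$ and all $i\in\{s,\dots,s+d\}$. Note that the resolution is by $S$-projectives, not $R$-projectives. I only apply $\Ext_R(-,M)$ to the $R$-modules involved and never need $R$-projectivity, so no base-change issue arises.

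Now I dimension shift. Let $K_0=S/J$, let $K_j$ be the $j$-th syzygy, and note $K_d=P_d$. The short exact sequences $0\to K_{j+1}\to P_j\to K_j\to 0$ give exact sequences
\[
\Ext_R^{i-1}(K_{j+1},M)\to \Ext_R^{i}(K_j,M)\to \Ext_R^{i}(P_j,M).
\]
Taking $i=s+d-j$ for $j=0,\dots,d-1$, the right-hand term vanishes, since $s+d-j$ lies in $[s+1,s+d]$. Hence $\Ext_R^{s+d-j}(K_j,M)$ is a quotient of $\Ext_R^{s+d-j-1}(K_{j+1},M)$. Chaining these for $j=0,\dots,d-1$ shows that $\Ext_R^{s+d}(S/J,M)$ is a subquotient of $\Ext_R^{s}(K_d,M)=\Ext_R^s(P_d,M)=0$, which finishes the proof.

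The main point needing care is the index bookkeeping: the chain must consume exactly the range $s,\dots,s+d$, and it does, with $P_d$ using index $s$ and $P_0$ using index $s+d$. One could instead phrase this as a spectral sequence $E_1^{j,i}=\Ext_R^i(P_j,M)\Rightarrow\Ext_R^{i+j}(S/J,M)$, which is presumably why the theorem carries that label. However, the elementary dimension-shifting argument above is cleaner, and I would present that version.
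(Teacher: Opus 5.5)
Your proof is correct. It differs from the paper's only in how you obtain the key vanishing $\Ext_R^{s+d}(S/J,M)=0$.

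The paper invokes the change-of-rings spectral sequence $E_2^{p,q}=\Ext_S^p(S/J,\Ext_R^q(S,M))\Rightarrow \Ext_R^{p+q}(S/J,M)$, which comes from $R\Hom_R(S/J,M)\cong R\Hom_S(S/J,R\Hom_R(S,M))$. It then observes that every term on the antidiagonal $p+q=s+d$ vanishes, because $0\le p\le d$ forces $q\in[s,s+d]$.

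You work one page earlier. Since $\Ext_R^i(P_j,M)\cong \Hom_S(P_j,\Ext_R^i(S,M))$ for $S$-projective $P_j$, your terms $\Ext_R^i(P_j,M)$ are exactly the $E_1$-terms whose cohomology gives the paper's $E_2$-page. Your chain of surjections $\Ext_R^{s}(K_d,M)\twoheadrightarrow\cdots\twoheadrightarrow\Ext_R^{s+d}(S/J,M)$ is the hands-on unwinding of that spectral sequence along the antidiagonal. It actually exhibits a quotient, not merely a subquotient. Your index bookkeeping is right: $P_d$ uses $i=s$ and $P_0$ uses $i=s+d$.

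What each route buys:
\begin{itemize}
\item Your argument is self-contained and needs no derived adjunction or convergence discussion.
\item The paper's version is shorter once the spectral sequence is available, and it mirrors the Bhatt--Iyengar--Ma argument it is modeled on.
\item The paper's version is also slightly more flexible in principle. It only needs the $E_2$-terms $\Ext_S^p(S/J,\Ext_R^q(S,M))$ to vanish on the antidiagonal, rather than the modules $\Ext_R^q(S,M)$ themselves.
\end{itemize}

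The final step, passing from $S/J\cong k^{(\Lambda)}$ to $k$ via products, is identical in both.
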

	
	\begin{proof}
		The proof is almost the same as that of \cite[Theorem 2.1]{bhatt2019Regular}. Consider the spectral sequence (see, for example, \citeSta{0AVG})
		\[
		E_2^{p,q} = \Ext_S^p(S/J, \Ext_R^q(S, M)) \Rightarrow \Ext_R^{p+q}(S/J, M).
		\]
		Since \(\Ext_R^q(S, M) = 0\) for \(q = s, \dots, s+d\) and \(\Ext_R^p(S/J, -) = 0\) for \(p > d\), we have \(E_2^{p,q} = 0\) for all \(p, q \in \mathbb Z\) with \(p + q = s + d\). This shows that \(\Ext_R^{s+d}(S/J, M) = 0\).
		Since \(J\) contains \(\mfrakm S\), the ring \(S/J\) is a nonzero \(R/\mfrakm\)-module, and hence there is an isomorphism \(S/J \cong k^{\oplus \Lambda}\) for some nonempty index set \(\Lambda\). Since the functor \(\Ext_R^{s+d}(-, M)\) sends direct sums to products, we have
		\[
		\Ext_R^{s+d}(S/J, M) \cong \prod_{\Lambda} \Ext_R^{s+d}(k, M).
		\]
		Therefore, we conclude the desired vanishing \(\Ext_R^{s+d}(k, M) = 0\).
	\end{proof}
	
	We will show that the projective dimension of a ``perfectly principal ideal'' is at most one. 
	
	\begin{lemma} \label{LemFinPdim}
		Let \(R\) be a reduced ring and let \(x\) be an element of \(R\) with a compatible system of \(p\)-power roots \(\{x^{1/p^n}\}_{n \geq 1}\) in \(R\). Then the ideal \((x^{1/p^\infty}) = \bigcup_{n \geq 1} x^{1/p^n}R\) satisfies \(\pd_R((x^{1/p^\infty})) \leq 1\).
	\end{lemma}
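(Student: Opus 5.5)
The plan is to write the ideal \((x^{1/p^\infty})\) as a countable direct limit of free \(R\)-modules of rank one. Then the standard telescope presentation of a countable colimit gives a projective resolution of length at most one.

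First, define the direct system. For each \(n \geq 1\) let \(F_n = R\). Let the transition map \(F_n \to F_{n+1}\) be multiplication by \(t_n \defeq x^{1/p^n - 1/p^{n+1}}\), which means \((x^{1/p^{n+1}})^{p-1}\). There is a compatible map \(\varphi_n \colon F_n \to (x^{1/p^\infty})\) sending \(1 \mapsto x^{1/p^n}\). Compatibility holds because \(t_n x^{1/p^{n+1}} = x^{1/p^n}\). This induces a surjection \(\varphi \colon L \defeq \colim_n F_n \to (x^{1/p^\infty})\), since every element of the ideal lies in some \(x^{1/p^n}R\).

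The key step, and the only place reducedness enters, is injectivity of \(\varphi\). Suppose \(y \in F_n\) satisfies \(y x^{1/p^n} = 0\). I claim \(y\) dies in \(L\), namely \(y t_n = 0\) in \(F_{n+1}\).

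\begin{itemize}
\item Raising \(y x^{1/p^n} = 0\) to the \(p^n\)-th power gives \(y^{p^n} x = 0\). Since \(R\) is reduced, \(\operatorname{Ann}(x)\) is a radical ideal, so \(yx = 0\).
\item Now take any positive rational exponent \(a = m/p^k\) for which \(x^a\) is defined via the chosen roots. Then \((y x^a)^{p^k} = y^{p^k} x^m\), which is zero because \(m \geq 1\) and \(yx = 0\).
\item Reducedness again gives \(y x^a = 0\). Applying this with \(x^a = t_n\) shows \(y t_n = 0\).
\end{itemize}

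The same argument covers an element of \(L\) represented in any \(F_n\), so \(\varphi\) is an isomorphism \(L \cong (x^{1/p^\infty})\).

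Finally, for a countable colimit of modules there is the short exact sequence
\[
0 \to \bigoplus_{n} F_n \xrightarrow{\ \mathrm{id} - \mathrm{shift}\ } \bigoplus_n F_n \to L \to 0 .
\]
Injectivity of the first map holds because we compare the highest-index component. Both left-hand terms are free, so \(\pd_R(L) \leq 1\), and hence \(\pd_R((x^{1/p^\infty})) \leq 1\).

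The main point to get right is the injectivity step. The rest is formal.
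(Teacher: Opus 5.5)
Your argument is correct and is essentially the paper's proof. The paper uses the same colimit presentation and applies \(R\Hom_R(-,M)\) to get the triangle \(R\lim M \to \prod M \xrightarrow{1-\mathrm{shift}} \prod M\), which is just the dual of your telescope sequence; you additionally spell out the reducedness check that the paper only attributes to \cite{bhatt2019Regular}.

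One small correction: injectivity of \(\mathrm{id}-\mathrm{shift}\) should be read off from the lowest-index nonzero component \(a_m\), which maps to itself. The highest-index component of the image is \(-t_N a_N\), and this can vanish because \(t_N\) may be a zero-divisor, for example in a product ring.
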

	
	\begin{proof}
		As in the proof of \cite[Lemma 3.2]{bhatt2019Regular}, the \(R\)-module \((x^{1/p^\infty})\) can be identified with the colimit
		\[
		(x^{1/p^\infty}) \cong \colim_{n \ge 0}
		\left(
		R \xrightarrow{\times x^{1/p - 1/p^2}} R \xrightarrow{\times x^{1/p^2 - 1/p^3}} \cdots
		\xrightarrow{\times x^{1/p^n - 1/p^{n+1}}} R \xrightarrow{\times x^{1/p^{n+1} - 1/p^{n+2}}} \cdots
		\right),
		\]
		since \(R\) is reduced. For any \(R\)-module \(M\), we have
		\[
		R\Hom_R((x^{1/p^\infty}), M) \cong R\lim_{n \ge 0} \bigl( R\Hom_R(R, M) \bigr) \cong R\lim_{n \ge 0} M
		\]
		in \(\mathcal D(R)\), where the transition map \(M \to M\) is given by multiplication by \(x^{1/p^n - 1/p^{n+1}}\).
		Since we have a distinguished triangle
		\[
		R\lim_{n \ge 0} M \longrightarrow \prod_{n \ge 0} M \xrightarrow{1 - \mathrm{shift}} \prod_{n \ge 0} M
		\]
		in \(\mathcal D(R)\), the cohomology groups \(H^i(R\Hom_R((x^{1/p^\infty}), M))\) vanish for \(i \neq 0, 1\). This shows that the projective dimension of \((x^{1/p^\infty})\) is at most one as an \(R\)-module.
	\end{proof}
	
	\begin{proposition} \label{TheoFinPdim}
		Let \(R\) be a ring and let \(I\) be an ideal of \(R\) such that \(I\) contains \(p\) and is generated by \(r\) elements up to radical. Let \(A\) be a reduced \(R\)-algebra such that it contains a compatible system \(\{f_0^{1/p^n}\}_{n \geq 1}\) of \(p\)-power roots of a unit multiple of \(p\) and \(A/(f_0^{1/p^\infty})\) is perfect.\footnote{By \cite[Lemma 3.9]{bhatt2018Integral}, any perfectoid \(R\)-algebra satisfies these conditions.}
		Then \(A/\sqrt{IA}\) has a finite projective dimension as an \(A\)-module and, in particular, \(\pd_A(A/\sqrt{IA}) \leq 2(r+1)\).
	\end{proposition}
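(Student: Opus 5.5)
We follow the strategy of \cite[Lemma 3.2]{bhatt2019Regular}: write \(\sqrt{IA}\) as a "perfect" ideal generated by \(p\)-power roots of finitely many elements, then build \(A/\sqrt{IA}\) by successive quotients, each by a perfectly principal ideal. \Cref{LemFinPdim} controls each step. First choose \(g_1, \dots, g_r \in R\) with \(\sqrt{I} = \sqrt{(g_1, \dots, g_r)}\). Since \(p \in I\), we also have \(\sqrt{I} = \sqrt{(p, g_1, \dots, g_r)}\), and hence
\[
\sqrt{IA} = \sqrt{(f_0, g_1, \dots, g_r)A},
\]
because \(f_0\) is a unit multiple of \(p\).

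Set \(A_0 \defeq A/(f_0^{1/p^\infty})\). This ring is perfect by hypothesis, and it is reduced because it is a perfect \(\setF_p\)-algebra. In a perfect ring the images of the \(g_i\) have unique compatible \(p\)-power roots. Define inductively
\[
A_j \defeq A_{j-1}/(g_j^{1/p^\infty}), \qquad j = 1, \dots, r.
\]
A quotient of a perfect ring by an ideal of the form \((g^{1/p^\infty})\) is again perfect, since that ideal is radical: if \(y^p \in (g^{1/p^n})\) then \(y \in (g^{1/p^{n+1}})\) by perfectness. So each \(A_j\) is perfect and hence reduced. Then \(A_r\) is a reduced quotient of \(A\) in which \(f_0, g_1, \dots, g_r\) become nilpotent, so it is \(A/\mathfrak{a}\) with \(\sqrt{IA} \subseteq \mathfrak{a}\). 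Conversely, every element of \(\mathfrak{a}\) is killed in finitely many steps by \(p\)-power roots of \(f_0, g_1, \dots, g_r\). Inductively, its \(p^N\)-th power lies in \((f_0, g_1, \dots, g_r)A\), after lifting through the quotients and using that each kernel consists of elements with a power in the previous ideal. Hence \(\mathfrak{a} = \sqrt{IA}\) and \(A_r = A/\sqrt{IA}\).

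Next we count dimensions. By \Cref{LemFinPdim}, applied to the reduced ring \(A\) with \(x = f_0\), the ideal \((f_0^{1/p^\infty})\) has \(\pd_A \le 1\), so \(\pd_A(A_0) \le 2\). Similarly \(\pd_{A_{j-1}}(A_j) \le 2\) for each \(j\), using that \(A_{j-1}\) is reduced. The standard change-of-rings estimate \(\pd_A(N) \le \pd_B(N) + \pd_A(B)\) for a ring map \(A \to B\) and a \(B\)-module \(N\) then gives
\[
\pd_A(A_r) \le 2 + 2r = 2(r+1).
\]
This estimate follows, for instance, from the spectral sequence \(\Ext_B^p(N, \Ext_A^q(B, M)) \Rightarrow \Ext_A^{p+q}(N, M)\) used in \Cref{SpectralSequence}.

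The main obstacle is the identification \(A_r = A/\sqrt{IA}\), specifically the inclusion \(\mathfrak{a} \subseteq \sqrt{IA}\). This requires tracking the lifts carefully through the iterated quotients. The key point is that the kernel of \(A \to A_j\) is contained in \(\sqrt{(f_0, g_1, \dots, g_j)A}\), which follows by induction since each new ideal \((g_j^{1/p^\infty})\) lies in the radical of \(g_j\) modulo the previous kernel. A secondary check is that the \(g_i\) genuinely acquire compatible \(p\)-power roots in \(A_0\). This holds because \(A_0\) is perfect, so \(\{g^{1/p^n}\}\) is defined by inverting Frobenius.
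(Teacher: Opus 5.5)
Your proof is correct and takes essentially the same route as the paper: the same tower of perfectly principal quotients \(A \to A/(f_0^{1/p^\infty}) \to \cdots \to A/\sqrt{IA}\), with \Cref{LemFinPdim} applied at each reduced (indeed perfect) stage. The difference is only bookkeeping. The paper composes the bounds through explicit distinguished triangles and an \(R\lim\) computation of \(R\Hom_A\) over the previous quotient, whereas you use \(\pd_{A_{j-1}}(A_j)\le 2\) together with the change-of-rings inequality \(\pd_A(N)\le \pd_B(N)+\pd_A(B)\); you also spell out the perfectness of the intermediate quotients and the equality \(A_r = A/\sqrt{IA}\), which the paper leaves implicit.
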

	
	\begin{proof}
		Let \(\sqrt{I} = \sqrt{(f_1, \dots, f_r)} = \sqrt{(p, f_1, \dots, f_r)}\) be a generating set of \(I\) up to radical in \(R\). This implies that \(\sqrt{IA} = \sqrt{(f_0, f_1, \dots, f_r)A}\) in \(A\).
		Since \(A/(f_0^{1/p^\infty})\) is perfect, the class \(\overline{f_i} \in A/(f_0^{1/p^\infty})\) has a compatible system of \(p\)-power roots \(\{\overline{f_i}^{1/p^n}\}_{n \geq 1}\) in \(A/(f_0^{1/p^\infty})\) for each \(i = 1, \dots, r\).
		Writing these representatives in \(A\) as \(f_{i, n}\) in \(A\) for \(n \geq 0\), we have \(f_{i, 0} = f_i\) and \(f_{i, n+1}^p - f_{i, n} \in (f_0^{1/p^\infty})\) for any \(n \geq 0\).
		Then we have \(\sqrt{IA} = (f_0^{1/p^n}, f_{1, n}, \dots, f_{r, n})_{n \geq 0}\) since \(A/(f_0^{1/p^\infty})\) is perfect.
		For any \(A\)-module \(M\), we have a distinguished triangle
		\begin{equation*}
		R\Hom_A(A/(f_0^{1/p^\infty}), M) \to R\Hom_A(A, M) \to R\Hom_A((f_0^{1/p^\infty}), M)
		\end{equation*}
		in \(\mcalD(A)\). By \Cref{LemFinPdim}, the ideal \((f_0^{1/p^\infty})\) has a projective dimension at most one as an \(A\)-module since \(A\) is reduced. This shows that the first term \(R\Hom_A(A/(f_0^{1/p^\infty}), M)\) is concentrated in cohomological degree \([0, 2]\).
		Next, we have a distinguished triangle
		\begin{equation*}
		R\Hom_A(A/(f_0^{1/p^\infty}, f_{1, n})_{n \geq 0}, M) \to R\Hom_A(A/(f_0^{1/p^\infty}), M) \to R\Hom_A((\overline{f_1}^{1/p^\infty})A/(f_0^{1/p^\infty}), M)
		\end{equation*}
		in \(\mcalD(A)\). The middle term is concentrated in cohomological degree \([0, 2]\) as we have seen above. Since \(A/(f_0^{1/p^\infty})\) satisfies the assumption on \Cref{LemFinPdim}, we have
		\begin{equation*}
		R\Hom_A((\overline{f_1}^{1/p^\infty})A/(f_0^{1/p^\infty}), M) \cong R\lim_{n \geq 0}R\Hom_A(A/(f_0^{1/p^\infty}), M)
		\end{equation*}
		in \(\mcalD(A)\) as in the proof of the lemma. The right-hand side is the derived limit of a system of complexes concentrated in cohomological degree \([0, 2]\), and hence it is concentrated in cohomological degree \([0, 3]\). This shows that the first term \(R\Hom_A(A/(f_0^{1/p^\infty}, f_{1, n})_{n \geq 0}, M)\) is concentrated in cohomological degree \([0, 4]\). By repeating the same argument, we see that the complex \(R\Hom_A(A/\sqrt{IA}, M) = R\Hom_A(A/(f_0^{1/p^\infty}, f_{1, n}, f_{2, n}, \dots, f_{r, n})_{n \geq 0}, M)\) is concentrated in cohomological degree \([0, 2(r+1)]\). So the projective dimension of \(A/\sqrt{IA}\) is at most \(2(r+1)\) as an \(A\)-module.
	\end{proof}
	
	\begin{remark} \label{RemarkProjDim}
		We review some previous works on the projective dimension of perfect(oid) algebras related to the above two results.
		\begin{enumerate}
			\item In \cite[Theorem 3.1]{aberbach1997Finite}, they consider a perfect ring \(R\) of characteristic \(p\) and \(p\)-torsion-free reduced ring \(R\) such that \(\sqrt{pR}\) is generated by roots of \(p\) and \(R/\sqrt{pR}\) is perfect. Under this assumption, they proved that the flat dimension holds \(\fdim_R (R/I) \leq r\) if an ideal \(I\) is \(\sqrt{(x_1, \dots, x_r)}R\) and \(x_1 = p\) in the second case.
			\item After that, in \cite[Lemma 3.2]{bhatt2019Regular}, they gave an elementary proof of the above result for perfect rings of characteristic \(p\). As explained in the proof \Cref{LemFinPdim}, we follow their argument.
			\item Similarly but independently, in \cite[Lemma 4.4]{asgharzadeh2010Homological}, the first-named author proved that if \(R\) is an integral domain of characteristic \(p\), then the ideal \((x^{\infty})R^+\) generated by roots of \(x\) in the absolute integral closure \(R^+\) has a projective dimension one as an \(R^+\)-module. Using this, the upper bound of the projective dimension of \(R^+/(x_1^{\infty}, \dots, x_r^{\infty})R^+\) is also given in \cite[Lemma 4.5]{asgharzadeh2010Homological}~ as Aberbach--Hochster's result.
		\end{enumerate}
		Consequently, \Cref{LemFinPdim} and \Cref{TheoFinPdim} are slight modifications of the above results in the setting of projective dimension.
	\end{remark}
	
	Similarly, we can find an upper bound of the injective dimension of quotients of perfectoid algebras if its injective dimension is finite.
	
	\begin{proposition} \label{TheoFinIdim}
		Keep the setting of  \Cref{TheoFinPdim}. Assume further that \(R\) is Noetherian and \(A\) has finite injective dimension as an \(R\)-module. Then \(A/\sqrt{IA}\) has a finite injective dimension as an \(R\)-module.
		In particular, one has \(\idim_R(A/\sqrt{IA}) \leq \idim_R (A)\).
	\end{proposition}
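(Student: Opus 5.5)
The plan is to avoid the degree-counting of \Cref{TheoFinPdim} and instead build \(A/\sqrt{IA}\) from \(A\) by the same sequence of \(p\)-power-root quotients. At each step we show that the injective dimension over \(R\) does not go up. Set \(n \defeq \idim_R(A) < \infty\).

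\textbf{Criterion over \(R\).} Since \(R\) is Noetherian, Baer's criterion gives, for any \(R\)-module \(N\),
\[
\idim_R(N) \le n \iff \Ext_R^{n+1}(R/\mathfrak a, N) = 0 \text{ for all ideals } \mathfrak a \subseteq R .
\]
Each \(R/\mathfrak a\) has a resolution by finite free \(R\)-modules. Hence \(\Ext_R^{i}(R/\mathfrak a, -)\) commutes with filtered colimits. Consequently, a filtered colimit of \(R\)-modules each of injective dimension \(\le n\) again has injective dimension \(\le n\). I expect this to be the main point needing care, but it is standard for Noetherian \(R\).

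\textbf{One step of the reduction.} Let \(B\) be a reduced \(R\)-algebra (via \(R \to A \to B\)) with \(\idim_R(B) \le n\). Let \(g \in B\) have a compatible system of \(p\)-power roots. As in the proof of \Cref{LemFinPdim}, reducedness identifies the ideal \((g^{1/p^\infty})\) with
\[
\colim\bigl(B \xrightarrow{\times g^{1/p-1/p^2}} B \xrightarrow{\times g^{1/p^2-1/p^3}} \cdots\bigr)
\]
as \(B\)-modules, hence as \(R\)-modules. By the criterion, \(\idim_R((g^{1/p^\infty})) \le n\). Now apply \(\Ext_R^{*}(R/\mathfrak a, -)\) to \(0 \to (g^{1/p^\infty}) \to B \to B/(g^{1/p^\infty}) \to 0\). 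This gives the exact piece
\[
\Ext_R^{n+1}(R/\mathfrak a, B) \to \Ext_R^{n+1}(R/\mathfrak a, B/(g^{1/p^\infty})) \to \Ext_R^{n+2}(R/\mathfrak a, (g^{1/p^\infty})).
\]
Both outer terms vanish, so \(\idim_R(B/(g^{1/p^\infty})) \le n\).

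\textbf{Iteration.} Keep the notation of the proof of \Cref{TheoFinPdim}. First take \(B = A\) and \(g = f_0\); this gives \(\idim_R(A_1) \le n\) for \(A_1 \defeq A/(f_0^{1/p^\infty})\). By hypothesis \(A_1\) is perfect, hence reduced, and \(\overline{f_1}\) has compatible \(p\)-power roots in \(A_1\). Apply the step to \(A_1\) and \(\overline{f_1}\) to get \(A_2 \defeq A_1/(\overline{f_1}^{1/p^\infty})\) with \(\idim_R(A_2) \le n\).

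The ideal \((\overline{f_1}^{1/p^\infty})\) is stable under taking \(p\)-th roots. So \(A_2\) is again perfect, hence reduced, and contains compatible roots of \(\overline{f_2}\). Repeating \(r\) times yields
\[
A/(f_0^{1/p^n}, f_{1,n}, \dots, f_{r,n})_{n \ge 0} .
\]
The proof of \Cref{TheoFinPdim} identifies this ring with \(A/\sqrt{IA}\). Hence \(\idim_R(A/\sqrt{IA}) \le n = \idim_R(A)\).
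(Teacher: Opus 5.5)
Your proposal is correct and follows essentially the same route as the paper. The paper also uses the short exact sequences $0 \to (g^{1/p^\infty}) \to B \to B/(g^{1/p^\infty}) \to 0$, bounds the ideal through its presentation as a filtered colimit of copies of $B$ (using that $R\Hom_R(M,-)$ commutes with filtered colimits for finitely generated $M$ over Noetherian $R$), and iterates through $A/(f_0^{1/p^\infty})$ and the successive perfect quotients; the only cosmetic difference is that you test injective dimension via Baer's criterion with $\Ext^{n+1}_R(R/\mathfrak a,-)$, whereas the paper tests with $R\Hom_R(M,-)$ for all finitely generated $M$.
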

	
	\begin{proof}
		We take \(f_{1, n}, \dots, f_{r, n}\) as in the proof of \Cref{TheoFinPdim}. Let \(M\) be a finitely generated \(R\)-module.
		We have a distinguished triangle
		\[
		R\Hom_R(M, (f_0^{1/p^\infty})A) \to R\Hom_R(M, A) \to R\Hom_R(M, A/(f_0^{1/p^\infty}))
		\]
		in \(\mathcal D(R)\). The middle term \(R\Hom_R(M, A)\) is concentrated in cohomological degrees \([0, \idim_R(A)]\). Since \(M\) is finitely generated and \(R\) is Noetherian, we have
		\[
		R\Hom_R(M, (f_0^{1/p^\infty})A) \cong \colim_{n \ge 0} R\Hom_R(M, A)\quad(\ast),
		\]
		in \(\mathcal D(R)\) (see \citeSta{0G8V}), where the transition map is given by multiplication by \(f_0^{1/p^n - 1/p^{n+1}}\).
		Since \(R\Hom_R(M, A)\in\mathcal D^{[0, \idim_R(A)]}(R)\), the filtered colimit $(\ast)$ also belongs to \(\mathcal D^{[0, \idim_R(A)]}(R)\). Hence, the last term \(R\Hom_R(M, A/(f_0^{1/p^\infty}))\) is also concentrated in cohomological degrees \([0, \idim_R(A)]\). This shows that \(A/(f_0^{1/p^\infty})\) has finite injective dimension at most \(\idim_R(A)\) as an \(R\)-module (see, for example, \citeSta{0A5T}).
		Next, we have a distinguished triangle
		\[
		R\Hom_R(M, (\overline{f_1}^{1/p^\infty})A/(f_0^{1/p^\infty})) 
		\to R\Hom_R(M, A/(f_0^{1/p^\infty})) 
		\to R\Hom_R(M, A/(f_0^{1/p^\infty}, f_{1, n})_{n \ge 0})
		\]
		in \(\mathcal D(R)\). The middle term is concentrated in cohomological degrees \([0, \idim_R(A)]\) as shown above. Moreover, the same argument applied to the perfect \(R\)-algebra \(A/(f_0^{1/p^\infty})\)—which has injective dimension at most \(\idim_R(A)\)—shows that the first term 
		\(R\Hom_R(M, (\overline{f_1}^{1/p^\infty})A/(f_0^{1/p^\infty}))\) is also concentrated in cohomological degrees \([0, \idim_R(A)]\). Consequently, the third term satisfies the same bound.
		By repeating this argument, we obtain that 
		\[
		A/(f_0^{1/p^\infty}, f_{1, n}, \dots, f_{r, n})_{n \ge 0} = A/\sqrt{IA}
		\]
		has finite injective dimension at most \(\idim_R(A)\) as an \(R\)-module.
	\end{proof}
	
	
	
	Unless otherwise stated, by a complete module we mean complete with respect to the 
	$\mfrakm$-adic topology.
	While we assume that \(R\) has isolated singularity in (2) of the following corollary, this assumption can be removed if \(M\) is finitely generated or even complete (see \Cref{TheoEquivGor} (2) below).
	
	\begin{corollary} \label{CorVanishing}
		Let \((R,\mfrakm, k)\) be a $d$-dimensional Noetherian local ring and assume \(p \in \mfrakm\).  
		Let \(A\) be a reduced \(R\)-algebra satisfying \(\mfrakm A \neq A\), and suppose \(A\) is equipped with a compatible system \(\{f_0^{1/p^n}\}_{n \ge 1}\) of \(p\)-power roots of a unit multiple of \(p\). Assume further that \(A/(f_0^{1/p^\infty})\) is perfect.
		
		If an \(R\)-module \(M\) satisfies
		\( 
		\Ext_R^i(A, M) = 0\)   {for} \( i = s, \dots, s + 2(d+ 1)
		\)
		for some integer \(s \in \mathbb Z\), then
		$    
		\Ext_R^{s + 2(d + 1)}(k, M) = 0.
		$    
		In particular, the following assertions are valid.
		\begin{enumerate}
			\item If \(s \ge -d - 2\), then
			\(
			\Ext_R^i(k, M) = 0 \)  for all   \(i \ge s + 2(d+ 1) \).
			\item If, in addition to (1), \(R\) has an isolated singularity, then \(M\) has finite injective dimension as an \(R\)-module.
		\end{enumerate}
	\end{corollary}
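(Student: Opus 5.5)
Apply \Cref{SpectralSequence} with \(S = A\) and \(J = \sqrt{\mfrakm A}\).

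\emph{Setting up \(J\).} Since \(\mfrakm A \neq A\), the radical \(J\) is a proper ideal of \(A\) containing \(\mfrakm A\). Because \(R\) is local of dimension \(d\), \(\mfrakm\) is generated up to radical by a system of parameters of \(d\) elements. The ideal \(I = \mfrakm\) contains \(p\), so \Cref{TheoFinPdim} applies with \(r = d\). It gives \(\pd_A(A/J) \le 2(d+1)\), where \(J = \sqrt{IA} = \sqrt{\mfrakm A}\).

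\emph{Padding to the exact length \(2(d+1)\).} \Cref{SpectralSequence} is stated with \(d' = \pd_A(A/J)\) exactly, so I would not use its statement verbatim. Its proof only uses that \(\Ext^p_A(A/J, -) = 0\) for \(p > d'\) and that \(\Ext^q_R(A, M) = 0\) for \(q = s, \dots, s + 2(d+1)\). For each pair \((p,q)\) with \(p + q = s + 2(d+1)\), either \(p > 2(d+1) \ge d'\), or \(q < s\) forces \(p > 2(d+1)\), or \(0 \le p \le 2(d+1)\), in which case \(q\) lies in the vanishing range. Hence \(E_2^{p,q} = 0\) along the whole antidiagonal \(p + q = s + 2(d+1)\). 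The same argument then gives \(\Ext_R^{s+2(d+1)}(A/J, M) = 0\). Since \(A/J\) is a nonzero direct sum of copies of \(k\) and \(\Ext_R\) turns direct sums into products, this yields \(\Ext_R^{s+2(d+1)}(k, M) = 0\).

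\emph{Assertion (1).} I read the hypothesis as vanishing of \(\Ext_R^i(A, M)\) for all \(i \ge s\). Then the window \(s', \dots, s' + 2(d+1)\) is contained in the vanishing range for every \(s' \ge s\). Applying the main statement to each such \(s'\) gives \(\Ext_R^i(k, M) = 0\) for all \(i \ge s + 2(d+1)\). The condition \(s \ge -d-2\) just says \(s + 2(d+1) \ge d\), so these vanishing degrees lie at or above \(\dim R\). Without this reading, a single window only kills a single degree.

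\emph{Assertion (2).} Here we need to pass from \(\Ext_R^i(k, M) = 0\) for \(i \gg 0\) to \(\idim_R M < \infty\) when \(M\) is arbitrary; this is the main obstacle. The isolated singularity hypothesis says \(R_\mathfrak{p}\) is regular for every \(\mathfrak{p} \neq \mfrakm\). I would show that \(\Ext_R^i(N, M) = 0\) for all finitely generated \(N\) and all \(i \gg 0\), and then invoke Baer's criterion via \citeSta{0A5T}.

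The induction runs on \(\dim N\) through a prime filtration, so it suffices to treat \(N = R/\mathfrak{p}\). If \(\mathfrak{p} = \mfrakm\), the vanishing is the hypothesis. If \(\mathfrak{p} \neq \mfrakm\), choose \(x \in \mfrakm \setminus \mathfrak{p}\) and use
\[
0 \to R/\mathfrak{p} \xrightarrow{x} R/\mathfrak{p} \to R/(\mathfrak{p} + xR) \to 0 .
\]
By induction the right-hand term has vanishing high \(\Ext\), so multiplication by \(x\) on \(\Ext_R^i(R/\mathfrak{p}, M)\) is bijective for \(i \gg 0\). Thus \(\Ext_R^i(R/\mathfrak{p}, M)\) is a module over \(R_x\). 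Localizing at \(x\), the ring \(R_x\) is regular of dimension \(< d\), so \(R/\mathfrak{p}\) has finite projective dimension over \(R_x\). Combined with flatness of \(R \to R_x\), this should give \(\Ext_R^i(R/\mathfrak{p}, M)_x = \Ext_{R_x}^i((R/\mathfrak{p})_x, M_x) = 0\) for \(i > d\). The localization of \(\Ext\) is only delicate for finitely generated first arguments, which is the case here.

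Hence \(\Ext_R^i(R/\mathfrak{p}, M) = 0\) for \(i \gg 0\), with a bound uniform in \(\mathfrak{p}\), namely \(\max(d, s + 2(d+1))\). This uniformity is what Baer's criterion requires, so \(\idim_R M < \infty\).
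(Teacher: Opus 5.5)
Your main vanishing assertion is correct and is essentially the paper's argument. The paper applies \Cref{SpectralSequence} verbatim to the sub-window $s+D-e,\dots,s+D$, with $e=\pd_A(A/\sqrt{\mfrakm A})\le D=2(d+1)$, which amounts to your padding. Your direct argument that $\Ext_R^{i}(k,M)=0$ for $i\gg 0$ plus an isolated singularity forces $\idim_R M<\infty$ is also correct: induct on $\dim N$, invert $x\in\mfrakm\setminus\mathfrak p$ so that $R_x$ is regular of dimension at most $d-1$, get a uniform bound, and apply Baer's criterion. It is a self-contained replacement for the paper's citation of Schoutens' Theorem VI.9.

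\textbf{The gap is in (1).} You replaced the hypothesis by ``$\Ext_R^i(A,M)=0$ for all $i\ge s$.'' But (1) is an ``in particular'' of the single-window statement. Only the vanishing for $i=s,\dots,s+2(d+1)$ is assumed, so the main assertion gives you one group only: $\Ext_R^{n}(k,M)=0$ with $n=s+2(d+1)$.

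The missing idea is rigidity of $\Ext$ with coefficients in the residue field, which the paper invokes via \cite[Theorem 1.2]{christensen2019Rigidity}. For an arbitrary, not necessarily finitely generated, $R$-module $M$: if $\Ext_R^n(k,M)=0$ for some $n\ge\dim R$, then $\Ext_R^i(k,M)=0$ for all $i\ge n$. The hypothesis $s\ge -d-2$ is precisely $n\ge d$, which is exactly what this theorem needs. That is why the hypothesis played no role in your reading, and that should have been a warning sign.

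Since (2) is stated ``in addition to (1),'' it inherits the gap: as written you get $\idim_R M<\infty$ only under the stronger all-degrees hypothesis. Once (1) is proved via rigidity, your argument for (2) goes through unchanged. Your weaker version does suffice for the later applications in \Cref{TheoEquivGor} and \Cref{FinInjDimPerfRegular}, which assume vanishing for all $i\gg 0$. It is not, however, the stated result.
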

	
	\begin{proof}
		We first prove the main vanishing assertion. Set \(D \coloneqq 2(d+ 1)=2(\dim R + 1)\) and \(e \coloneqq \pd_A(A/\sqrt{\mfrakm A})\). By \Cref{TheoFinPdim}, we have \(e \le D\), since \(\mfrakm\) can be generated, up to radical, by at most \(\dim R\) elements. The quotient \(A/\sqrt{\mfrakm A}\) is a nonzero \(k\)-algebra because \(\mfrakm A \neq A\).
		The assumed vanishing gives
		\[
		\Ext_R^i(A, M) = 0 \quad \text{for } i = s + D - e, \dots, s + D.
		\]
		Applying \Cref{SpectralSequence} to the data \((R, A, \sqrt{\mfrakm A}, M, e)\), we obtain
		\[
		\Ext_R^{s + D}(k, M)
		= \Ext_R^{s + D - e + e}(k, M)
		= 0,
		\]
		which proves the first assertion.
		
		For the final assertion, note that it follows from \cite[Theorem 1.2]{christensen2019Rigidity}, once we establish that \(\Ext_R^i(k, M) = 0\) for some \(i \ge d\), which is precisely the content of part (1) under the given hypotheses.
		Combining  $(1)$ with \cite[Theorem VI.9]{schoutens2003Projectivea} yields the desired conclusion (2).
	\end{proof}
	
	\medskip
	\noindent
	
	\section{Characterization via perfectoid algebras}
	
	Building on the results from the previous section, we now give a characterization of homological dimensions of finitely generated modules over Noetherian local rings in terms of perfectoid algebras. In order to handle the analogous perfectoid conditions that will appear below, we introduce the following notation.
	
	\begin{notation} \label{GeneralPerfectoidCondition}
		Let \((R, \mfrakm, k)\) be a Noetherian local ring with \(p \in \mfrakm\) and let \(A\) be an \(R\)-algebra such that \(\mfrakm A \neq A\).
		Consider the following   conditions on \(A\):
		\begin{enumalphp}
			\item \(A\) is a perfectoid \(R\)-algebra. \label{ConditionPerfd}
			\item \(A\) is a reduced \(R\)-algebra such that it contains a compatible system \(\{f_0^{1/p^n}\}_{n \geq 1}\) of \(p\)-power roots of a unit multiple of \(p\) and \(A/(f_0^{1/p^\infty})\) is perfect. \label{ConditionGeneralPerfd}
		\end{enumalphp}
		Since the implication \ref{ConditionPerfd} \(\Rightarrow\) \ref{ConditionGeneralPerfd} holds by \cite[Lemma 3.9]{bhatt2018Integral}, we will often only consider the condition \ref{ConditionGeneralPerfd} in the following results.
	\end{notation}
	
	\subsection{Detection of finite injective dimension and Gorenstein property}
	
	First, we consider the injective dimension, and present three consequences.
	
	\begin{theorem} \label{TheoEquivGor}
		Let \((R, \mfrakm, k)\) be a Noetherian local ring with \(p \in \mfrakm\) and let \(M\) be a nonzero finitely generated or \(\mfrakm\)-adically complete \(R\)-module. Then the following are equivalent:
		\begin{enumerate}
			\item \(M\) has finite injective dimension as an \(R\)-module.
			\item The vanishing \(\Ext_R^i((R/pR)_{\perf}, M) = 0\) holds for \(i \gg 0\).
			\item The vanishing \(\Ext^i_R(A, M) = 0\) holds for \(i \gg 0\) for some \(R\)-algebra \(A\) satisfying one of the conditions in  \Cref{GeneralPerfectoidCondition}.
		\end{enumerate}In particular, the satisfaction of any one of these equivalent conditions implies that $R$ is Cohen--Macaulay.
	\end{theorem}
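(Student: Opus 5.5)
The plan is to prove the cycle (2) \(\Rightarrow\) (3) \(\Rightarrow\) (1) \(\Rightarrow\) (2). The Cohen--Macaulay conclusion is then a by-product.

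For (2) \(\Rightarrow\) (3), it suffices to check that \(A \defeq (R/pR)_{\perf}\) satisfies condition \ref{ConditionGeneralPerfd} of \Cref{GeneralPerfectoidCondition}. It is reduced and perfect, hence perfectoid. Take \(f_0 = 0\), which is \(0\cdot p\) in characteristic \(p\); formally one may instead view \(p\) itself as \(0 = 1\cdot p\) in \(A\), so that \(f_0^{1/p^n}=0\). Then \(A/(f_0^{1/p^\infty}) = A\) is perfect. The condition \(\mfrakm A\neq A\) holds because \(R/\mfrakm\) is a quotient of \(R/pR\), so the perfection surjects onto \((R/\mfrakm)_{\perf}\neq 0\).

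For (3) \(\Rightarrow\) (1), apply \Cref{CorVanishing} with \(s\) large: \(\Ext^i_R(A,M)=0\) for \(i\ge s\) gives \(\Ext^i_R(k,M)=0\) for all \(i\ge s+2(d+1)\). If \(M\) is finitely generated, the vanishing of a single \(\Ext^i_R(k,M)\) with \(i>\depth R\) (or even just \(i \gg 0\) via minimal injective resolutions: Bass numbers \(\mu_i(\mfrakm,M)=0\)) forces finite injective dimension. More precisely, for finitely generated \(M\), \(\Ext^i_R(k,M)=0\) for \(i\gg0\) implies \(\mu_i(\mathfrak p,M)=0\) for \(i\gg0\) at all primes, by the standard Bass-number shifting \(\mu_i(\mathfrak p)\neq 0\Rightarrow \mu_{i+\dim R/\mathfrak p}(\mfrakm)\neq0\). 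This already gives \(\idim_R M<\infty\) without the isolated singularity hypothesis used in \Cref{CorVanishing}(2).

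The main obstacle is the \(\mfrakm\)-adically complete, not finitely generated case. Here I would first pass to the completion \(\widehat R\), which is harmless since complete modules are \(\widehat R\)-modules. I would then use the Matlis/Greenlees--May type fact that for derived \(\mfrakm\)-complete \(M\), \(R\Hom_R(k,M)\) controls injective dimension. The criterion I would cite is that of \cite{christensen2019Rigidity} or \cite{schoutens2003Projectivea}: for a derived complete module, \(\Ext^i_R(k,M)=0\) for all \(i>n\) implies \(\idim_R M\le n\). Concretely, I would show that \(\Ext^i_R(N,M)=0\) for every finitely generated \(N\) and \(i>n\) by induction on \(\dim N\). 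Filter \(N\) by \(R/\mathfrak p\), and use the sequence \(0\to R/\mathfrak p\xrightarrow{x}R/\mathfrak p\to R/(\mathfrak p,x)\to 0\); the latter has smaller dimension, so multiplication by \(x\) is surjective on \(\Ext^i(R/\mathfrak p,M)\) for \(i>n\). Completeness of \(M\) makes these Ext modules derived \(\mfrakm\)-complete, and derived Nakayama then kills them. This is the delicate step. One has to verify that \(\Ext^i_R(R/\mathfrak p,M)\) is derived complete and that surjectivity of \(x\) for \(x\in\mfrakm\setminus\mathfrak p\) suffices.

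For (1) \(\Rightarrow\) (2), set \(A=(R/pR)_{\perf}\). Since \(M\) has finite injective dimension and \(R\) is Noetherian, \(\Ext^i_R(X,M)=0\) for \(i>\idim_R M\) for every module \(X\); this holds for arbitrary \(X\), because injective resolutions compute Ext in the first variable for all modules. So (2) is immediate. Finally, for Cohen--Macaulayness: (1) for a nonzero finitely generated \(M\) gives \(R\) Cohen--Macaulay by Bass's conjecture (Peskine--Szpiro, Roberts). In the complete case I would reduce to a finitely generated module, for example by noting that \(\Ext^i_R(k,M)\neq 0\) for some \(i\) by derived Nakayama. That reduction would be the second point needing care; alternatively one could invoke the big Cohen--Macaulay version of Bass's question via \(\widehat R\).
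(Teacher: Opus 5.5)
\textbf{Gap: the Cohen--Macaulay clause for complete $M$.} You have not proved the final assertion when $M$ is $\mfrakm$-adically complete but not finitely generated. Derived Nakayama gives $\Ext^i_R(k,M)\neq 0$ for some $i$, but that does not produce a finitely generated module of finite injective dimension. In fact no argument that uses only ``$\idim_R M<\infty$ and $\Ext^i_R(k,M)\neq 0$ for some $i$'' can work. The module $E_R(k)$ has injective dimension $0$ and $\Hom_R(k,E_R(k))=k\neq 0$ over every local ring, Cohen--Macaulay or not. Since $E_R(k)$ is not complete when $\dim R>0$, this refutes your proposed reduction, not the statement. Completeness therefore has to be used in an essential way. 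Your appeal to a ``big Cohen--Macaulay version of Bass's question'' is too vague to check. The statement you actually need is that a nonzero complete module of finite injective dimension forces $R$ to be Cohen--Macaulay, and you do not prove it. The paper gets it by quoting \cite[Theorem~7.5]{simon1990Homological}, so you need that theorem or a genuine proof of it. Your finitely generated case, via Bass's question (Peskine--Szpiro, Roberts), is fine.

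\textbf{The equivalence itself is correct.} It follows the paper's skeleton. (1)$\Rightarrow$(2)$\Rightarrow$(3) is formal, and your check that $(R/pR)_{\perf}$ satisfies condition (b) with $f_0=0$ and $\mfrakm A\neq A$ is exactly what is needed. (3)$\Rightarrow$(1) is Corollary~\ref{CorVanishing} plus the criterion that $\Ext^i_R(k,M)=0$ for $i\gg 0$ forces $\idim_R M<\infty$.
\begin{itemize}
\item The paper cites that criterion (\citeSta{0AVJ}, resp.\ \cite[\S 3.3]{simon1990Homological}). You instead prove it, by Bass-number shifting in the finitely generated case and by induction on $\dim N$ in the complete case.
\item The induction works, and both points you flag as delicate hold. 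First, $\Ext^i_R(R/\mathfrak p,M)$ is derived $\mfrakm$-complete because $M$ is. Second, for $i>n$, the element $x\in\mfrakm\setminus\mathfrak p$ acts surjectively on it, so derived Nakayama for $(x)$ kills it.
\item The references you name for this criterion are not the right ones: Schoutens' result needs an isolated singularity. Your own argument makes a citation unnecessary.
\item Your parenthetical claim that a single vanishing $\Ext^i_R(k,M)=0$ with $i>\depth R$ suffices is false. Take $R=k[[x,y,z]]/(x^2,xy,xz)$ and $M=R/xR\cong k[[y,z]]$. Then $\depth R=0$ and $\Ext^1_R(k,M)=0$, yet $\idim_R M=\infty$ because $R$ is not Cohen--Macaulay. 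This does no harm, since you then use $i\gg 0$.
\end{itemize}
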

	
	\begin{proof}
		(1) \(\Rightarrow\) (2) \(\Rightarrow\) (3): Since \(M\) is an \(R\)-module with finite injective dimension, we have \(\Ext_R^i(A, M) = 0\) for \(i \gg 0\) and for any \(R\)-algebra \(A\).
		
		(3) \(\Rightarrow\) (1): By \citeSta{0AVJ}~ for finitely generated modules or \cite[\S 3.3]{simon1990Homological} for \(\mfrakm\)-adically complete modules, it suffices to show that \(\Ext_R^i(k, M) = 0\) for \(i \gg 0\) when (3) holds for \(A\) satisfying the condition \Cref{GeneralPerfectoidCondition} \ref{ConditionGeneralPerfd}. By \Cref{CorVanishing} and \(\mfrakm A \neq A\), it suffices to show that \(\Ext_R^i(A, M) = 0\) for \(i \gg 0\), which holds by the assumption.
		
		The last assertion follows as an immediate consequence of \cite[Theorem~7.5]{simon1990Homological} (resp. \cite[Corollary 9.6.2]{bh}) when $M$ is complete (resp. finitely generated).
	\end{proof}

	The following may be compared with \cite[Theorem 4.4]{bhatt2019Regular}; however, unlike there, the module here is $\mfrakm$-torsion (for example Artinian) and not necessarily finitely generated.
	
	\begin{corollary}
		Let $(R,\mfrakm,k)$ be a Noetherian local ring with \(p \in \mfrakm\), and let
		$L$ be an $\mfrakm$-torsion $R$-module. Suppose that \(\Tor_i^R(A, L) = 0\) for \(i \gg 0\), where $A$ is an $R$-algebra satisfying one of the conditions in  \Cref{GeneralPerfectoidCondition}.
		Then \(\pd_R(L) < \infty\), and consequently $R$ is Cohen--Macaulay.
	\end{corollary}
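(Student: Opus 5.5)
The plan is to dualize, using Matlis duality to turn the Tor hypothesis into an Ext hypothesis, and then apply \Cref{TheoEquivGor}. Let \(E \defeq E_R(k)\) be the injective hull of the residue field and set \(L^\vee \defeq \Hom_R(L, E)\). Since \(E\) is injective, Hom--tensor adjunction gives
\[
\Ext^i_R(A, L^\vee) \cong \Hom_R(\Tor_i^R(A, L), E)
\]
for every \(i\). The hypothesis \(\Tor_i^R(A,L)=0\) for \(i \gg 0\) therefore yields \(\Ext_R^i(A, L^\vee) = 0\) for \(i \gg 0\).

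Next I check that \(L^\vee\) falls under \Cref{TheoEquivGor}. Since \(L\) is \(\mfrakm\)-torsion, we have \(L = \colim_n (0:_L \mfrakm^n)\), and so
\[
L^\vee \cong \lim_n \Hom_R(0:_L\mfrakm^n, E).
\]
Each term in this limit is killed by \(\mfrakm^n\). I would argue that \(L^\vee\) is \(\mfrakm\)-adically complete; this is the standard fact that Matlis duals of torsion modules are complete, since \(L^\vee \cong \Hom_R(L, E) \cong \Hom_{\widehat R}(L, E)\) is derived complete and separated. If \(L=0\) the claim is trivial. Otherwise \(L^\vee \neq 0\), because \(E\) is a cogenerator. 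Then \Cref{TheoEquivGor} (3)\(\Rightarrow\)(1) gives \(\idim_R(L^\vee) < \infty\), and its last assertion shows that \(R\) is Cohen--Macaulay.

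Finally, I return from injective dimension of \(L^\vee\) to projective dimension of \(L\). For \(i\) large, \(\Ext^i_R(k, L^\vee) \cong \Tor_i^R(k,L)^\vee = 0\), so \(\Tor_i^R(k, L) = 0\) for \(i \gg 0\), which means \(\fdim_R L < \infty\). By Jensen/Raynaud--Gruson, the flat dimension is then bounded by \(\dim R\), and for a Noetherian ring of finite Krull dimension every module of finite flat dimension has finite projective dimension. Hence \(\pd_R(L) < \infty\).

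The main obstacle is the completeness claim for \(L^\vee\), in the precise sense required by \cite[\S 3.3]{simon1990Homological}. If that claim is delicate, I would bypass it. In that case I would rerun the proof of \Cref{CorVanishing} directly for \(L\), using the \(\Tor\)-version of \Cref{SpectralSequence}: the spectral sequence \(\Tor^A_p(A/\sqrt{\mfrakm A}, \Tor^R_q(A, L)) \Rightarrow \Tor^R_{p+q}(A/\sqrt{\mfrakm A}, L)\) together with \(\pd_A(A/\sqrt{\mfrakm A}) \le 2(d+1)\) gives \(\Tor^R_i(k, L)=0\) for \(i \gg 0\). Here Tor commutes with direct sums, so \(A/\sqrt{\mfrakm A} \cong k^{\oplus\Lambda}\) works just as well. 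Finiteness of \(\pd_R L\) then follows as above. Cohen--Macaulayness also follows, since by the new intersection theorem (or the results of Simon/Schoutens for \(\mfrakm\)-torsion modules) a nonzero \(\mfrakm\)-torsion module of finite projective dimension forces \(R\) to be Cohen--Macaulay.
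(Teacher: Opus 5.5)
\textbf{Verdict: your main route is the paper's, but the final step has a gap that needs a one-line fix.}

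The overall strategy matches the paper. You dualize via $\Ext^i_R(A,L^\vee)\cong\Tor_i^R(A,L)^\vee$, and you use that $L^\vee$ is $\mfrakm$-adically complete because $L$ is $\mfrakm$-torsion; the paper cites Simon, \S 4.2, for this. You then apply \Cref{TheoEquivGor} to get $\idim_R(L^\vee)<\infty$. Deriving Cohen--Macaulayness from the last assertion of \Cref{TheoEquivGor}, applied to the nonzero complete module $L^\vee$, is a legitimate variant. The paper instead uses the fact that a nonzero $\mfrakm$-torsion module of finite projective dimension forces $R$ to be Cohen--Macaulay. Either way one needs $L\neq 0$. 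For $L=0$ only the $\pd$ claim is trivial; the Cohen--Macaulay conclusion fails in general, so your remark that the claim is trivial when $L=0$ is too quick.

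The gap is in the return step. You dualize only against $k$ and then assert that $\Tor_i^R(k,L)=0$ for $i\gg 0$ ``means'' $\fdim_R L<\infty$. For modules that are not finitely generated this is not a valid principle. Take a prime $\mathfrak p\neq\mfrakm$ with $R_{\mathfrak p}$ singular and set $L=k(\mathfrak p)$. Then $\Tor_*^R(k,L)=0$, since $k_{\mathfrak p}=0$. Yet $\Tor_i^R(R/\mathfrak p,k(\mathfrak p))\cong\Tor_i^{R_{\mathfrak p}}(k(\mathfrak p),k(\mathfrak p))\neq 0$ for all $i$, so the flat dimension is infinite.

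For $\mfrakm$-torsion $L$ the implication is true, but only by an extra argument that uses the torsion hypothesis. One option is the Avramov--Foxby description of flat dimension through $k(\mathfrak p)\otimes^{\mathbf{L}}_R L$ over all primes, where only $\mathfrak p=\mfrakm$ contributes. Your proposal neither supplies such an argument nor invokes torsion at this point.

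In your main route the repair is immediate, and it is exactly what the paper does. Since $\idim_R(L^\vee)<\infty$, for every $R$-module $N$ and every $i>\idim_R(L^\vee)$ we have $\Tor_i^R(N,L)^\vee\cong\Ext^i_R(N,L^\vee)=0$. Faithfulness of Matlis duality then gives $\Tor_i^R(N,L)=0$, so $\fdim_R L\le\idim_R(L^\vee)$, and Jensen/Gruson--Raynaud yields $\pd_R L<\infty$.

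Your fallback route via the $\Tor$ spectral sequence correctly produces $\Tor^R_{\gg 0}(k,L)=0$. However, it ends with the same unjustified inference, and there you have no finiteness of $\idim_R(L^\vee)$ to fall back on. The torsion-specific flat-dimension argument would therefore be genuinely required in that version.
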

	
	\begin{proof}
		Let $(-)^\vee \defeq \Hom_R(-,E_R(k))$ denote the Matlis duality functor.
		By \cite[\S 4.1]{simon1990Homological}, one has
		$0= \Tor_i^R(A,L)^\vee
		\cong
		\Ext_R^i(A,L^\vee).$	 
		Since $L$ is $ \mfrakm$-torsion, and by \cite[\S 4.2, Lemma]{simon1990Homological} we deduce that $L^\vee$ is complete.
		Hence, \Cref{TheoEquivGor} shows $\idim_R(L^\vee)<\infty$.
		By using the same formula, for any \(R\)-module \(N\) and any \(i > \idim_R(L^{\vee})\), we have
		\begin{equation*}
		\Tor_i^R(N, L)^{\vee} \cong \Ext_R^i(N, L^{\vee}) \cong 0.
		\end{equation*}
		Since the Matlis duality functor is faithful, we conclude that \(\Tor_i^R(N, L) = 0\) for all \(i > \idim_R(L^{\vee})\). In particular, we have \(\pd_R(L) < \infty\).
		Finally, the existence of a nonzero $\mfrakm$-torsion  module of finite projective dimension implies that $R$ is Cohen--Macaulay (\cite[Observation 3.15]{asgharzadeh2023Notes}).
	\end{proof}
	
	Since the perfectoid algebras can be regarded as a counterpart of the Frobenius endomorphism (more precisely, the perfect closure) in mixed characteristic, it is natural to expect that the Gorenstein property of Noetherian local rings can be characterized in terms of perfectoid algebras as done in positive characteristic (\cite{herzog1974Ringe,takahashi2004Characterizing,iyengar2004Gdimension})
	This is the following corollary.
	
	\begin{corollary} \label{EquivGorCharp}
		Let \((R, \mfrakm, k)\) be a Noetherian local ring with \(p \in \mfrakm\). Then the following are equivalent:
		\begin{enumerate}
			\item \(R\) is Gorenstein.
			\item The vanishing \(\Ext_R^i((R/pR)_{\perf}, R) = 0\) holds for \(i \gg 0\).
			\item There exists \(n > 0\) such that for infinitely many \(e > 0\), the vanishing \(\Ext_R^i(F_*^e(R/pR), R) = 0\) holds for \(i \geq n\).
			\item The vanishing \(\Ext_R^i(A, R) = 0\) holds for \(i \gg 0\) for some \(R\)-algebra \(A\) satisfying one of the conditions in  \Cref{GeneralPerfectoidCondition}.
			\item There exists an \(R\)-algebra \(A\) satisfying one of the conditions in  \Cref{GeneralPerfectoidCondition} such that its Gorenstein projective dimension is finite, i.e., \(\Gpd_R(A) < \infty\).\footnote{For the notion of Gorenstein projective/flat dimensions, we use \cite{christensen2011Totally} as a reference. Following this reference, we will cite the results on Gorenstein projective/flat dimensions.}
		\end{enumerate}
	\end{corollary}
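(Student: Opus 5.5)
We plan to prove the cycle (1) \(\Rightarrow\) (2) \(\Rightarrow\) (4) \(\Rightarrow\) (1), and then attach (3) and (5) to it. Throughout we take \(M = R\), which is finitely generated, so \Cref{TheoEquivGor} applies.

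\textbf{The main cycle.} For (1) \(\Rightarrow\) (2): if \(R\) is Gorenstein, then \(\idim_R(R) = \dim R < \infty\). Hence \(\Ext_R^i(N, R) = 0\) for every \(R\)-module \(N\) and every \(i > \dim R\), in particular for \(N = (R/pR)_{\perf}\). For (2) \(\Rightarrow\) (4): the ring \((R/pR)_{\perf}\) is a perfect \(\setF_p\)-algebra, so it is perfectoid; it satisfies \Cref{GeneralPerfectoidCondition}\ref{ConditionGeneralPerfd} with \(f_0 = 0\), since \(p = 0\) in it. It also satisfies \(\mfrakm A \neq A\): as \(p \in \mfrakm\), the quotient \(A/\mfrakm A\) is the perfection of \(R/\mfrakm = k\), which is nonzero. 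For (4) \(\Rightarrow\) (1): \Cref{TheoEquivGor}, applied with \(M = R\), gives \(\idim_R R < \infty\), which means \(R\) is Gorenstein.

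\textbf{Attaching (3).} The implication (1) \(\Rightarrow\) (3) is again immediate from \(\idim_R R < \infty\), taking \(n = \dim R + 1\). For (3) \(\Rightarrow\) (2), we write \((R/pR)_{\perf} = \colim_e F^e_*(R/pR)\) as a countable filtered colimit. For the cofinal subsequence of those \(e\) with the vanishing, there is the Milnor short exact sequence
\[
0 \to {\lim}^1_e \Ext_R^{i-1}(F_*^e(R/pR), R) \to \Ext_R^i((R/pR)_{\perf}, R) \to \lim_e \Ext_R^{i}(F_*^e(R/pR), R) \to 0.
\]
For \(i \geq n+1\) both outer terms vanish, since the relevant Ext groups are zero for \(i-1 \ge n\) along the subsequence. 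This gives (2).

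A subtle point is that \(F^e_*(R/pR)\) must be read as an \(R\)-module via \(R \to R/pR \xrightarrow{F^e} R/pR\). With that reading the transition maps are the Frobenius maps and the colimit is indeed the perfection. No \(F\)-finiteness is needed here.

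\textbf{Attaching (5).} For (1) \(\Rightarrow\) (5): over a Gorenstein local ring every module has \(\Gpd_R \le \dim R\), a result of Enochs--Jenda recorded in \cite{christensen2011Totally}. So any \(A\) as in (4), for instance \((R/pR)_{\perf}\), works.

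For (5) \(\Rightarrow\) (4) or (1), the natural approach is to use that a module \(N\) of finite Gorenstein projective dimension satisfies \(\Ext_R^i(N, P) = 0\) for all projective \(P\) and all \(i > \Gpd_R(N)\). Taking \(P = R\) gives \(\Ext^i_R(A, R) = 0\) for \(i > \Gpd_R(A)\), which is exactly (4). This vanishing holds because a Gorenstein projective module \(G\) has \(\Ext^{\ge 1}_R(G, P) = 0\) for projective \(P\) by definition. For general \(N\), we take a resolution of length \(g = \Gpd_R(N)\) by Gorenstein projectives and dimension-shift to get \(\Ext^i_R(N, R) \cong \Ext^{i-g}_R(G_g, R) = 0\) for \(i > g\).

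The main point to verify carefully is this dimension-shifting for non-finitely-generated \(A\): that \(\Gpd_R(A) = g\) really yields a length-\(g\) Gorenstein projective resolution and Ext-orthogonality to \(R\). This is standard in \cite{christensen2011Totally}, where the relevant statement is \(\Gpd_R(N) = \sup\{ i : \Ext^i_R(N, P) \ne 0,\ P \text{ projective}\}\) whenever it is finite. I expect this citation, rather than any new argument, to be the only real obstacle.
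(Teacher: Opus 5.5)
Your proposal is correct and follows the paper's proof essentially step for step. You get (1)$\Leftrightarrow$(2)$\Leftrightarrow$(4) via \Cref{TheoEquivGor} with $M=R$, and (3)$\Rightarrow$(2) via the Milnor sequence, which is just the long exact sequence of the $R\lim$/product triangle the paper uses. Your (5)$\Rightarrow$(4) uses $\Ext^i_R(A,R)=0$ for $i>\Gpd_R(A)$, and your dimension-shifting argument proves directly the easy direction of the Holm result the paper cites.

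One small inaccuracy: for $A=(R/pR)_{\perf}$ the ring $A/\mfrakm A$ need not be $k_{\perf}$ (e.g.\ $x^{1/p}$ is a nonzero nilpotent in $\setF_p[[x]]_{\perf}/(x)$). Only its perfection, namely $A/\sqrt{\mfrakm A}$, is $k_{\perf}$, and that still gives $\mfrakm A\neq A$.
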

	
	\begin{proof}
		The equivalences (1)~$\Leftrightarrow$~(2)~$\Leftrightarrow$~(4) are established in \Cref{TheoEquivGor}.
		
		If $R$ is Gorenstein, then every $R$-module has finite Gorenstein projective dimension (\cite[Theorem 12.3.1]{enochs2000Relative} or \cite[Theorem 2.19]{christensen2011Totally}); hence (1)~$\Rightarrow$~(5) holds. We now show (5)~$\Rightarrow$~(4). Assume that $\Gpd_R(A) < \infty$ for some $R$-algebra $A$ satisfying condition~\ref{ConditionGeneralPerfd} in \Cref{GeneralPerfectoidCondition}. Then, by \cite[Theorem 2.20]{holm2004Gorenstein}, we have $\Ext_R^i(A, R) = 0$ for all $i > \Gpd_R(A)$.
		
		The implication (1)~$\Rightarrow$~(3) is trivial, since $R$ has finite injective dimension as it is a Gorenstein ring. It remains to prove (3)~$\Rightarrow$~(2). We have an isomorphism
		\begin{equation*}
		R\Hom_R((R/pR)_{\perf}, R) \cong R\lim_{e \geq 0}R\Hom_R(F_*^e(R/pR), R) \cong R\lim_{e' \in E} R\Hom_R(F_*^{e'}(R/pR), R)
		\end{equation*}
		in $\mcalD(R)$, where $E \subseteq \mathbb{Z}_{\geq 0}$ is an infinite subset such that $\Ext_R^i(F_*^{e'}(R/pR), R) = 0$ for all $i \geq n$ and $e' \in E$. Writing $M_{e'}  \defeq  R\Hom_R(F_*^{e'}(R/pR), R)$, we have a distinguished triangle
		\begin{equation*}
		R\lim_{e' \in E}M_{e'} \to \prod_{e' \in E}M_{e'} \xrightarrow{1 - \text{shift}} \prod_{e' \in E}M_{e'}
		\end{equation*}
		in $\mcalD(R)$. Since each $M_{e'}$ is concentrated in cohomological degrees $[0, n-1]$, the triangle shows that $R\Hom_R((R/pR)_{\perf}, R)$ is concentrated in degrees $[0, n]$. In particular, $\Ext_R^i((R/pR)_{\perf}, R) $ is zero for all sufficiently large $i$. This proves (3)~$\Rightarrow$~(2), completing the proof.
	\end{proof}
	
	
	Using the above results, we can show that the vanishing of \(\Ext\) modules in all sufficiently high degrees implies the vanishing of all \(\Ext\) modules in positive degrees.
	
	\begin{corollary} \label{ExtAllVanishing}
		Let \((R, \mfrakm, k)\) be a Noetherian local ring of characteristic \(p\), and let \(M \neq 0\) be either a complete \(R\)-module or a finitely generated \(R\)-module. Then the following are equivalent:
		\begin{enumerate}
			\item \(\Ext^i_R(R_{\perf},M) = 0\) for all \(i > 0\);
			\item \(\Ext^i_R(R_{\perf},M) = 0\) for all \(i \gg 0\).
		\end{enumerate}
		In particular, $R$ is Gorenstein if and only if one of the equivalent conditions:
		\begin{equation*}
		\Ext^{>0}_R(R_{\perf}, R) = 0\Leftrightarrow\Ext^{\gg 0}_R(R_{\perf}, R) = 0\Leftrightarrow\Gpd_R(R_{\perf}) < \infty.
		\end{equation*}
	\end{corollary}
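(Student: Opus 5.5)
The implication (1)\(\Rightarrow\)(2) is trivial, so the work is in (2)\(\Rightarrow\)(1). I would first reduce it to finiteness of the injective dimension using \Cref{TheoEquivGor}. Since \(R\) has characteristic \(p\), we have \(p=0\) and \((R/pR)_{\perf}=R_{\perf}\). The algebra \(A=R_{\perf}\) satisfies condition \ref{ConditionGeneralPerfd} of \Cref{GeneralPerfectoidCondition}: it is reduced, the system \(f_0^{1/p^n}=0\) is a compatible system of roots of \(p=0\), and \(A/(f_0^{1/p^\infty})=A\) is perfect. Also \(\mfrakm A\neq A\), because \(R\to R_{\perf}\to k_{\perf}\) is nonzero. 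Hence (2) together with \Cref{TheoEquivGor} gives \(\idim_R(M)<\infty\), and \(R\) is Cohen--Macaulay. For \(M\) finitely generated, or complete by \cite{simon1990Homological}, this gives \(\idim_R(M)=\depth R=d\). Since \(\Ext^i_R(-,M)=0\) for \(i>d\), it remains to prove \(\Ext^i_R(R_{\perf},M)=0\) for \(1\le i\le d\).

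For this I would use the Foxby/Bass class description. Work in the completion if needed; for complete \(M\), the Ext is unchanged up to completion issues that \cite{simon1990Homological} handles. Write \(M\simeq D\otimes^L_R X\), where \(D\) is a normalized dualizing module (\(R\) is Cohen--Macaulay) and \(X=R\Hom_R(D,M)\) has finite flat dimension. Then
\[
R\Hom_R(R_{\perf},M)\simeq R\Hom_R(R_{\perf}\otimes^L_R R\Hom_R(D,R)\ldots)
\]
is not quite right, so the cleaner route is local duality. By Matlis duality, the vanishing of \(\Ext^i_R(R_{\perf},M)\) for \(1\le i\le d\) should be equivalent to the vanishing of \(\Tor^R_i(R_{\perf},M^\vee)\) in that range. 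Here \(M^\vee\) is an \(\mfrakm\)-torsion module of finite flat dimension, which amounts to \(H^{j}_{\mfrakm}\) of a finite-flat-dimension complex. The plan is then to show that \(R_{\perf}\otimes^L_R(-)\) is exact on \(\mfrakm\)-torsion modules of finite flat dimension.

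That exactness statement is the main obstacle. My first attempt would use a Frobenius argument. The ring map \(R_{\perf}\to R_{\perf}\), \(x\mapsto x^p\), is an isomorphism that is semilinear over the Frobenius of \(R\). Therefore \(\Tor^R_i(R_{\perf},N)\cong\Tor^R_i(R_{\perf},F^e_*{}^{\ast}N)\) for every \(e\), where the right side uses the Frobenius pullback. By Peskine--Szpiro, Frobenius pullback preserves finite free resolutions of finite length. I would use this to transport the Tor along \(R_{\perf}=\colim F^e_*R_{\mathrm{red}}\). The aim is to show that each class in \(\Tor^R_i\) with \(i\geq1\) dies after finitely many Frobenius twists, by comparing with \(\Tor^R_i\) of the perfect closure against the Koszul complex on a system of parameters. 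This is where Cohen--Macaulayness enters: such a Koszul complex is acyclic on \(R\) and on each \(F^e_*R\).

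If that fails, my fallback is the spectral-sequence style argument of \Cref{SpectralSequence}. The lower-degree vanishing would be extracted from the high-degree vanishing combined with \(\Ext^i_R(k,M)=0\) for \(i\neq d\) (Bass numbers of a finite-injective-dimension module over a Cohen--Macaulay ring), applied to the filtration of \(R_{\perf}\) by \(\sqrt{\mfrakm R_{\perf}}\) from \Cref{TheoFinPdim}.

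Finally, the displayed Gorenstein criterion follows from the equivalence just proved applied to \(M=R\), combined with \Cref{EquivGorCharp} (1)\(\Leftrightarrow\)(2)\(\Leftrightarrow\)(5) for \(A=R_{\perf}\).
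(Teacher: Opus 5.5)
\textbf{There is a genuine gap.} Your opening matches the paper's: you use \Cref{TheoEquivGor} to get $\idim_R(M)<\infty$ and deduce that $R$ is Cohen--Macaulay. The Gorenstein criterion at the end is also fine. The problem is the only nontrivial step, the vanishing of $\Ext^i_R(R_{\perf},M)$ for $1\le i\le d$. You call it the main obstacle and then give two tentative plans, neither of which is carried out.

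\begin{itemize}
\item \emph{Matlis-dual route.} Peskine--Szpiro concerns finitely generated modules with finite free resolutions, but $M^\vee$ is not finitely generated, so it does not apply as stated. Also, for a finitely generated non-complete $M$ you only have $M^{\vee\vee}\cong\widehat{M}$, so the dualization itself needs justification. The sentence about Tor classes ``dying after finitely many Frobenius twists'' is not an argument.
\item \emph{Fallback.} It runs in the wrong direction. \Cref{SpectralSequence} transports vanishing from $\Ext_R(S,M)$ to $\Ext_R(k,M)$, not conversely. Going back from $\Ext_R(R_{\perf}/\sqrt{\mfrakm R_{\perf}},M)$ to $\Ext_R(R_{\perf},M)$ would require control of $\Ext_R(\sqrt{\mfrakm R_{\perf}},M)$, which nothing in your proposal supplies.
\item \emph{Why generic inputs cannot suffice.} High-degree vanishing plus Bass numbers cannot give the result alone. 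The algebra $k_{\perf}$ satisfies condition (b) and has $\Ext^{\gg 0}_R(k_{\perf},\omega_R)=0$. Yet $\Ext^d_R(k_{\perf},\omega_R)\cong\prod\Ext^d_R(k,\omega_R)\neq 0$ for $d\ge 1$. So a property of $R_{\perf}$ that $k_{\perf}$ lacks must enter.
\end{itemize}

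\textbf{The missing idea.} Once $R$ is Cohen--Macaulay, $R_{\perf}$ is a balanced big Cohen--Macaulay $R$-algebra:
\begin{itemize}
\item As $R$-modules, $R_{\perf}=\colim_e F^e_*R$.
\item A system of parameters $x_1,\dots,x_d$ is regular on $F^e_*R$, because $x_1^{p^e},\dots,x_d^{p^e}$ is regular on $R$.
\item Regular sequences pass to filtered colimits, since these are exact.
\item $\mfrakm R_{\perf}\neq R_{\perf}$, because $R_{\perf}$ surjects onto $k_{\perf}$.
\end{itemize}
You wrote down the key ingredient (the Koszul complex on a system of parameters is acyclic on each $F^e_*R$) but did not draw this conclusion. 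With it, the paper finishes in one line. For a balanced big Cohen--Macaulay $B$ and a complete or finitely generated $M$ of finite injective dimension, \cite[Corollary~7.7]{simon1990Homological} gives $\Ext^i_R(B,M)=0$ for all $i>0$. This removes any need for a Frobenius-twist or Peskine--Szpiro analysis of $\Tor^R_i(R_{\perf},M^\vee)$.
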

	
	\begin{proof}
		Suppose \(\Ext^i_R(R_{\perf}, M) = 0\) for all \(i \gg 0\). By \Cref{TheoEquivGor}, \(M\) has finite injective dimension, and so \(R\) is Cohen--Macaulay (\cite[Corollary 9.6.2]{bh}). It follows that \(R_{\perf}\) is a balanced big Cohen--Macaulay \(R\)-algebra. Then \cite[Corollary~7.7]{simon1990Homological} implies that \(\Ext^i_R(R_{\perf}, M) = 0\) for all \(i > 0\).
		
		Applying the first part to $M  \defeq  R$ together with \Cref{EquivGorCharp} now yields the last assertion immediately.
	\end{proof}
	
	We record the following result which is a dual form of \Cref{ExtAllVanishing} above.
	
	\begin{corollary} \label{TorAllVanishing}
		Let \((R, \mfrakm, k)\) be an excellent local domain with \(p \in \mfrakm\), and let \(M\) be a finitely generated \(R\)-module. Let $A$ be the p-adic completion of the absolute integral closure of $R$. Then the following are equivalent:
		\begin{enumerate}
			\item \(\Tor^R_i(A, M) = 0\) for all \(i > 0\);
			\item \(\Tor^R_i(A, M) = 0\) for all \(i \gg 0\).
		\end{enumerate}
	\end{corollary}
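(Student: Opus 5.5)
The plan is to combine the finite-flat-dimension criterion of Bhatt--Iyengar--Ma with the fact that $A$ is a (balanced) big Cohen--Macaulay $R$-algebra. The implication (1) $\Rightarrow$ (2) is trivial, so everything is in (2) $\Rightarrow$ (1).

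First I will record the two properties of $A$ that are needed.
\begin{enumerate}
\item $A$ is a perfectoid $R$-algebra with $\mfrakm A \neq A$, since $A/\mfrakm A$ contains $R^+/\mfrakm R^+ \neq 0$. In characteristic $p$ one has $p=0$, so $A = R^+$, which is perfect.
\item $A$ is a balanced big Cohen--Macaulay $R$-algebra. In mixed characteristic this is Bhatt's theorem on the Cohen--Macaulayness of $\widehat{R^+}$ for excellent local domains. In characteristic $p$ it is the Hochster--Huneke theorem for $R^+$. In both cases every system of parameters of $R$ is a regular sequence on $A$ and $\mfrakm A \ne A$.
\end{enumerate}

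Assume $\Tor_i^R(A, M) = 0$ for $i \gg 0$. By \cite[Theorem 4.4]{bhatt2019Regular}, applied to the perfectoid algebra $A$ with $\mfrakm A \neq A$ and the finitely generated module $M$, we get $\pd_R(M) < \infty$. Alternatively one could try to argue through Matlis duality and \Cref{TheoEquivGor}. However, $M^\vee$ is Artinian and need not be complete, so I prefer to quote the flat-dimension criterion directly.

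Next, let $F_\bullet\colon 0 \to F_n \xrightarrow{\varphi_n} \cdots \xrightarrow{\varphi_1} F_0 \to 0$ be a minimal finite free resolution of $M$, and let $r_i$ be the expected rank of $\varphi_i$. We must show that $F_\bullet \otimes_R A$ is acyclic in positive degrees. I will use the Buchsbaum--Eisenbud acyclicity criterion over $R$: the ideal $I_{r_i}(\varphi_i)$ has grade at least $i$ on $R$, so in particular its height is at least $i$. Extending a partial system of parameters, $I_{r_i}(\varphi_i)$ then contains part of a system of parameters of length $\ge i$. Since $A$ is balanced big Cohen--Macaulay, this sequence is $A$-regular, so $I_{r_i}(\varphi_i)A$ has depth at least $i$ on $A$. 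The version of the Buchsbaum--Eisenbud/Peskine--Szpiro acyclicity criterion for arbitrary (non-finitely generated) modules, phrased in terms of Koszul or Ext depth, then gives $H_i(F_\bullet \otimes_R A) = \Tor_i^R(A,M) = 0$ for all $i > 0$. This is also the standard statement that finite projective dimension plus a balanced big Cohen--Macaulay algebra gives Tor-rigidity (Bruns--Herzog, Chapter 9).

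The main obstacle is this last step, for two reasons. First, we must check that the height bound on $I_{r_i}(\varphi_i)$ really produces an $A$-regular sequence of the right length when $R$ is not known to be Cohen--Macaulay. Here I would use that any ideal of height $h$ in a local domain contains the first $h$ elements of some system of parameters. Second, we need the form of the acyclicity lemma for non-Noetherian modules; I would first try to cite it, and otherwise reprove it by induction on $n$ using depth computed via Koszul homology.
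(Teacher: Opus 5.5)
Your proof is correct and follows the paper's route. You use \cite[Theorem 4.4]{bhatt2019Regular} to get $\pd_R(M)<\infty$, use that $A$ is a balanced big Cohen--Macaulay $R$-algebra, and conclude that finite projective dimension against such a module forces $\Tor_i^R(A,M)=0$ for all $i>0$. The only difference is that the paper cites \cite[Lemma 4.1]{asgharzadeh2021Note} for the last step, while you sketch its standard proof via the Buchsbaum--Eisenbud criterion, using that $I_{r_i}(\varphi_i)$ contains a partial system of parameters of length $i$. That sketch is sound and uses balancedness exactly where the paper's footnote says it is needed.
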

	
	\begin{proof}
		Suppose \(\Tor^R_i(A, M) = 0\) for all \(i \gg 0\). By \cite[Theorem 4.4]{bhatt2019Regular}, \(\pd_R(M)\) is finite. Recall that \(A\) is a balanced big Cohen--Macaulay \(R\)-algebra by \cite[Main Theorem 5.15]{hochster1992Infinite}, \cite[Corollary 5.17]{bhatt2021CohenMacaulayness}, and \cite[Corollary 2.10]{bhatt2023Globally}. Then \cite[Lemma 4.1]{asgharzadeh2021Note}\footnote{The argument in the cited lemma requires \(N\) to be balanced big Cohen--Macaulay. Indeed, although the \(\mfrakm\)-adic completion \(\widehat{N}\) of a big Cohen--Macaulay module is balanced, completion is not faithful for arbitrary non-finitely generated modules, so one cannot recover the vanishing of \(\Tor_i^R(M,N)\) from that of \(\Tor_i^R(M,\widehat{N})\).} implies (1).
	\end{proof}
	
	\subsection{Characterization of Cohen--Macaulay modules}
	Since we have the precise degree of vanishing of \(\Ext\) modules in \Cref{CorVanishing}, we can also give a characterization of Cohen--Macaulay modules in terms of perfectoid algebras.

	\begin{theorem} \label{TheoEquivCM}
		Let \((R, \mfrakm, k)\) be a Noetherian local ring with \(p \in \mfrakm\), and let \(M\) be  nonzero and finitely generated as an $R$-module.
		Then the following are equivalent:
		\begin{enumerate}
			\item \(M\) is Cohen--Macaulay.
			\item There exists an \(R\)-algebra \(A\) satisfying one of the conditions in \Cref{GeneralPerfectoidCondition} such that \(\Ext_R^i(A, M) = 0\) for all \(i < \dim(M)\).
		\end{enumerate}
	\end{theorem}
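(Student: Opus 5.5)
The plan is to prove (2)~$\Rightarrow$~(1) with the spectral sequence from the proof of \Cref{SpectralSequence}, read in low degrees, and to prove (1)~$\Rightarrow$~(2) with an explicit $\mfrakm$-torsion choice of $A$. Throughout, write $t \defeq \dim(M)$. Since $M$ is finitely generated, $M$ is Cohen--Macaulay if and only if $\Ext_R^j(k, M) = 0$ for all $j < t$, i.e.\ $\depth M \ge t$.

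\emph{(2)~$\Rightarrow$~(1).} Let $A$ satisfy one of the conditions in \Cref{GeneralPerfectoidCondition} with $\Ext_R^q(A, M) = 0$ for all $q < t$. Put $J \defeq \sqrt{\mfrakm A}$; this is a proper ideal because $\mfrakm A \neq A$. Consider, as in the proof of \Cref{SpectralSequence}, the first-quadrant spectral sequence
\[
E_2^{p,q} = \Ext_A^p(A/J, \Ext_R^q(A, M)) \Rightarrow \Ext_R^{p+q}(A/J, M).
\]
For $n < t$, every term with $p + q = n$ has $q \le n < t$, so it vanishes. Hence $\Ext_R^n(A/J, M) = 0$ for all $n < t$. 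Finiteness of $\pd_A(A/J)$ (\Cref{TheoFinPdim}) is not needed here, since we only look at low degrees.

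As in the proof of \Cref{SpectralSequence}, $A/J \cong k^{\oplus \Lambda}$ with $\Lambda \neq \emptyset$. Therefore
\[
0 = \Ext_R^n(A/J, M) \cong \prod_\Lambda \Ext_R^n(k, M),
\]
which gives $\Ext_R^n(k, M) = 0$ for all $n < t$. Thus $\depth M \ge \dim M$, and $M$ is Cohen--Macaulay.

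\emph{(1)~$\Rightarrow$~(2).} Since the statement only asks for the existence of some $A$, I will take $A \defeq k_{\perf}$, the perfection of the residue field. It is reduced, and since $p = 0$ in $A$ we may take $f_0 = 0$ with the trivial compatible system of roots. Then $A/(f_0^{1/p^\infty}) = A$ is perfect, and $\mfrakm A = 0 \neq A$. So $A$ satisfies condition \ref{ConditionGeneralPerfd}; in fact a perfect $\setF_p$-algebra is perfectoid, so \ref{ConditionPerfd} holds as well.

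It remains to show that $\Ext_R^i(N, M) = 0$ for $i < \depth M = t$ whenever $N$ is an arbitrary $\mfrakm$-torsion $R$-module. Take a minimal injective resolution $E^\bullet$ of $M$. Since $N$ is $\mfrakm$-torsion, $\Hom_R(N, E^i) = \Hom_R(N, \Gamma_\mfrakm(E^i))$. Moreover $\Gamma_\mfrakm(E^i) = E_R(k)^{\mu_i(\mfrakm, M)}$, and the Bass numbers satisfy $\mu_i(\mfrakm, M) = 0$ for $i < \depth M$. Hence $\Hom_R(N, E^i) = 0$ for $i < t$, and so $\Ext_R^i(A, M) = 0$ for $i < t$.

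The only delicate points are bookkeeping. The first is that the product decomposition of $\Ext_R^n(A/J, M)$ requires $\Lambda \neq \emptyset$, which is exactly where $\mfrakm A \neq A$ is used. The second is checking that the degenerate choice $A = k_{\perf}$ is admitted by \Cref{GeneralPerfectoidCondition}. If one insists on a less degenerate $A$ (say with $p$ a nonzerodivisor), then (1)~$\Rightarrow$~(2) becomes the main obstacle. I would then try a balanced big Cohen--Macaulay perfectoid algebra, such as the $p$-completion of $R^+$ when $R$ is an excellent domain, together with the Ext-vanishing of \cite[Corollary~7.7]{simon1990Homological}.
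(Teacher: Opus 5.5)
Your proof is correct. For (1)~$\Rightarrow$~(2) it matches the paper up to presentation. The paper also takes $A = k_{\perf}$ with $f_0 = 0$, but it simply writes $k_{\perf} \cong \bigoplus_\Lambda k$ and uses $\Ext_R^i(\bigoplus_\Lambda k, M) \cong \prod_\Lambda \Ext_R^i(k, M)$; your Bass-number argument proves the vanishing for every $\mfrakm$-torsion module at once.

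The genuine difference is in (2)~$\Rightarrow$~(1). The paper deduces it from \Cref{CorVanishing}: for each $s < \dim(M) - 2(\dim R + 1)$, the vanishing window $i = s, \dots, s + 2(\dim R + 1)$ yields $\Ext_R^{s + 2(\dim R + 1)}(k, M) = 0$. That step relies on the bound $\pd_A(A/\sqrt{\mfrakm A}) \le 2(\dim R + 1)$ from \Cref{TheoFinPdim}, and hence on the perfectoid-type hypothesis. You instead note that here the vanishing holds in all degrees below $t$. Since the spectral sequence is first-quadrant, every $E_2^{p,q}$ with $p+q<t$ already has $q<t$. Equivalently, $R\Hom_R(A/J, M) \cong R\Hom_A(A/J, R\Hom_R(A, M))$ with $R\Hom_R(A,M) \in \mcalD^{\geq t}$, so no bound on $\pd_A(A/J)$ is needed.

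Your route is more elementary and strictly more general. It proves (2)~$\Rightarrow$~(1) for any $R$-algebra $A$ with $\mfrakm A \neq A$; you could even take $J$ to be a maximal ideal containing $\mfrakm A$. So the perfectoid conditions play no real role in this particular theorem. The finite projective dimension is indispensable only for high-degree statements such as \Cref{TheoEquivGor}, where a bounded window of vanishing must be converted into vanishing of $\Ext_R^{\bullet}(k, M)$. What the paper's route buys in exchange is uniformity: the single \Cref{CorVanishing} serves both the low-degree and the high-degree characterizations.
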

	
	\begin{proof}
		Recall that \(k_{\perf}\)    is perfectoid ring of characteristic $p$ is precisely perfect ring of characteristic $p$. Since $p$ is zero in   \(k_{\perf}\) we can take $f_0$ as $0$. Then the quotient \(k_{\perf}=k_{\perf}/0\)  is a perfect ring. So, this  satisfies conditions in \Cref{GeneralPerfectoidCondition}.
		
		(1) \(\Rightarrow\) (2): The perfection \(k_{\perf}\) if the residue field is a desired perfectoid \(R\)-algebra. Indeed, if \(M\) is Cohen--Macaulay, then \(\Ext_R^i(k, M) = 0\) for all \(i < \depth(M) = \dim(M)\). Since \(k_{\perf} \cong \bigoplus_{\Lambda} k\), we have
		\( 
		\Ext_R^i( k_{\perf}, M) \cong \prod_{\Lambda} \Ext_R^i(k, M) = 0
		\)
		for all \(i < \dim(M)\).  
		
		
		(2) \(\Rightarrow\) (1): It suffices to show that \(\Ext_R^i(k, M) = 0\) for all \(i < \dim(M)\) when (2) holds for \(A\) satisfying \ref{ConditionGeneralPerfd} in \Cref{GeneralPerfectoidCondition}. Set $D \defeq 2(\dim(R) + 1)$.
		By \Cref{CorVanishing}, for any \(s < \dim(M) - D\), we have
		\(
		\Ext_R^{s+D}(k, M) = 0.
		\)
		As \(s\) ranges over all integers less than \(\dim(M) - D\), the exponent \(s+D\) ranges over all integers less than \(\dim(M)\). Hence \(\Ext_R^i(k, M) = 0\) for all \(i < \dim(M)\). Therefore \(\depth(M) \ge \dim(M)\). Since the reverse inequality \(\depth(M) \le \dim(M)\) always holds, we conclude that \(\depth(M) = \dim(M)\). Thus \(M\) is Cohen--Macaulay.
	\end{proof}
	
	In order to extend the preceding result to a more general setting, we must first recall the definition of Cohen--Macaulay modules without the finite-generation assumption.
	
	\begin{definition}[{cf.~\cite[Definition 2.1]{asgharzadeh2009Notion}}] \label{DefCMModule}
		Let $M$ be a (not-necessarily finitely generated) $R$-module. By $\Ext$-grade of $M$ we mean $\egrade_R(M) \defeq \egrade_R(\mfrakm, M) \defeq \inf\{i:\Ext^i_R(k,M)\neq 0\}.$
		The  dimension \(\dim(M)\) of $M$ is the supremum of lengths of chains of prime ideals in the support of $M$.  Finally, we say \(M\) is \emph{Cohen--Macaulay} precisely when its Ext-grade equals \(\dim(M)\).
	\end{definition}
	
	While part of the following  lemma has been proved in \cite[Proposition 8.2]{simon1990Homological}, we record a proof whose method is slightly different.
	
	\begin{lemma} \label{CMcomparison}
		Let \((R, \mfrakm)\) be a Noetherian local ring and let \(M\) be a nonzero complete \(R\)-module.
		Then \(M\) is (balanced) big Cohen--Macaulay \(R\)-module if and only if \(M\) is maximal Cohen--Macaulay, namely, \(M\) is Cohen--Macaulay in the sense of \Cref{DefCMModule} and \(\dim(M) = \dim(R)\).
	\end{lemma}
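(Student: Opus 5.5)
We fix a system of parameters \(x_1, \dots, x_d\) of \(R\), where \(d = \dim R\). We use the standard definitions: \(M\) is big Cohen--Macaulay if \(x_1,\dots,x_d\) is an \(M\)-regular sequence with \(M \neq (x_1,\dots,x_d)M\), and balanced if this holds for every system of parameters. The strategy is to translate both conditions into Koszul/local cohomology language, using that \(\egrade_R(M)\) can be computed by Koszul cohomology or local cohomology even for non-finitely generated \(M\). Concretely, for any \(R\)-module \(M\) we have
\[
\egrade_R(M) = \inf\{i : H^i_{\mfrakm}(M) \neq 0\} = d - \sup\{i : H_i(x_1,\dots,x_d; M) \neq 0\}.
\]
We will invoke these equalities (due to Strooker/Foxby-type arguments; they follow from the isomorphisms \(R\Hom_R(k, R\Gamma_\mfrakm M) \cong R\Hom_R(k,M)\) and \(\Kos(\underline{x};M) \simeq R\Hom_R(\Kos(\underline{x}),M)[-d]\), together with the fact that a complex with \(\mfrakm\)-torsion cohomology vanishes below degree \(n\) iff \(R\Hom(k,-)\) does). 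We also note that \(\dim(M) \le d\) always holds, and that the depth-type inequality \(\egrade_R(M) \le \dim(M)\) holds whenever \(H^*_\mfrakm(M)\neq 0\).

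\emph{Forward direction.} Assume \(M\) is big Cohen--Macaulay. Then \(\underline{x}\) is \(M\)-regular, so \(H_i(\underline{x};M)=0\) for \(i>0\), while \(H_0 = M/\underline{x}M \neq 0\). Hence \(\egrade_R(M) = d\). Since \(\egrade_R(M) \le \dim(M) \le d\), we conclude \(\dim M = d\) and \(M\) is Cohen--Macaulay in the sense of \Cref{DefCMModule}. Completeness is not needed here.

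\emph{Converse.} Assume \(\egrade_R(M)=\dim M = d\). Then \(\Ext^i_R(k,M)=0\) for \(i<d\), and by the Koszul formula \(H_i(\underline{y};M)=0\) for \(i>0\) and for every system of parameters \(\underline{y}\). The main obstacle is to pass from the vanishing of Koszul homology to regularity, and to show \(M \neq \underline{y}M\); both steps can fail for non-complete modules. Completeness handles this as follows.
\begin{enumerate}
\item Since \(M\) is \(\mfrakm\)-adically complete and nonzero, \(M \neq \mfrakm M\) (Nakayama for complete modules), so \(H_0(\underline{y};M) \neq 0\).
\item For regularity, we use that a complete module is derived \(\mfrakm\)-complete. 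For such modules, \(H_1(y_1,\dots,y_j;M)=0\) for all \(j\) follows from the vanishing of the total Koszul homology in positive degrees, by descending induction using the long exact sequences relating \(\Kos(y_1..y_j)\) and \(\Kos(y_1..y_{j+1})\). The key point is that multiplication by \(y_{j+1}\) is surjective on the relevant Koszul homology only if that homology vanishes, which is a derived Nakayama statement.
\end{enumerate}
This shows \(\underline{y}\) is \(M\)-regular for every system of parameters, so \(M\) is balanced big Cohen--Macaulay. Alternatively, we may simply cite \cite[Proposition 8.2]{simon1990Homological} for this step, which is where completeness is essential.
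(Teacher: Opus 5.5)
Your proof is correct. It follows the same overall plan as the paper: both translate \(\egrade_R(M)=d\) into Koszul-regularity of systems of parameters, via \(\egrade_R(M)=\inf\{i : H^{d-i}(\Kos(\underline{x};M))\neq 0\}\), and both use completeness only to upgrade Koszul-regularity to honest regularity.

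The difference lies in that upgrade.

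\textbf{The paper's route.} It mimics the finite-module argument: it peels off \(x_1^{n_1}\) and inducts on the quotient. This forces it to check that \(M/x_1^{n_1}M\) is again classically \(\mfrakm\)-adically complete. It does so through derived completeness together with a Mittag--Leffler computation of \(R\lim_k \Kos(M/x_1^{n_1}M; x_2^k,\dots,x_d^k)\). That computation is where it needs Koszul-regularity of all the power sequences \(x_1^{n_1},\dots,x_d^{n_d}\).

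\textbf{Your route.} Your descending induction on \(H_1(y_1,\dots,y_j;M)\) never leaves the derived-complete setting. It needs only two facts:
\begin{enumerate}
\item Koszul homology of a derived \(\mfrakm\)-complete module is again derived complete. This holds because derived complete modules form a weak Serre subcategory, which is the same Stacks fact the paper cites for \(M/x_1^{n_1}M\). You should state this explicitly, since it is exactly what licenses your Nakayama step on \(H_1(y_1,\dots,y_j;M)\).
\item A derived \(y\)-complete module \(N\) with \(yN=N\) vanishes.
\end{enumerate}
This works one system of parameters at a time and never needs classical completeness of the quotients, so it is a bit shorter and more self-contained. The paper's route instead gives the extra information that the quotients \(M/x_1^{n_1}M\) stay classically complete.

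You also spell out the dimension part, \(\egrade_R(M)\le\dim M\le d\) via Grothendieck vanishing. The paper's proof leaves this implicit.
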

	
	\begin{proof} Since $M$ is separated, which means  $\bigcap_n \mfrakm^n M=0$, we have \(\mfrakm M \neq M\), because $M$ is nonzero.
		Fix any system of parameter \(\underline{x} \defeq x_1, \dots, x_d\) of \(R\).
		Then we have an equality
		\begin{equation*}
		\egrade_R(M) = \inf\set{i}{H^i(\Hom_R(\Kos(\underline{x}; R), M)) \neq 0} = \inf\set{i}{H^{d-i}(\Kos(\underline{x}; M)) \neq 0},
		\end{equation*}
		by \cite[Proposition ~(iii)]{{asgharzadeh2009Notion}} and the duality of Koszul complexes.
		The above equality shows that the equality \(\egrade_R(M) = d\) holds if and only if the sequence \(x_1^{n_1}, \dots, x_d^{n_d}\) is a Koszul-regular sequence on \(M\) for any \(n_i \geq 1\) since they are again a system of parameters.
		We will show that if \(M\) is complete, then this Koszul-regular sequence is a regular sequence: As in the proof of \citeSta{061S}, the completeness implies that \(x_1^{n_1}\) is a non-zero-divisor on \(M\) and \(x_2^{n_2}, \dots, x_d^{n_d}\) is a Koszul-regular sequence on \(M/x_1^{n_1}M\) for all \(n_i \geq 1\).
		So by induction, it suffices to show that \(M/x_1^{n_1}M\) is complete.
		We can use the derived completeness here: Since \(M\) is complete, the quotient \(M/x_1^{n_1}M\) is derived \(\mfrakm\)-complete (\citeSta{091U}).
		Since the Koszul complex \(\Kos(M/x_1^{n_1}M; x_2^{n_2}, \dots, x_d^{n_d})\) is quasi-isomorphic to the usual quotient \(M/(x_1^{n_1}, \dots, x_d^{n_d})\), we have an isomorphism
		\begin{equation*}
		M/x_1^{n_1}M \cong R\lim_{k \geq 1} \Kos(M/x_1^{n_1}M; x_2^{k}, \dots, x_d^{k}) \cong R\lim_{k \geq 1} M/(x_1^{n_1}, x_2^k, \dots, x_d^k) \cong \widehat{M/x_1^{n_1}M}
		\end{equation*}
		in \(\mcalD(R)\), where the first isomorphism follows from the derived completeness, the last isomorphism follows from the Mittag--Lefller condition, and \(\widehat{M/x_1^{n_1}M}\) is the \(\mfrakm\)-adic completion.
	\end{proof}

	\begin{corollary} \label{TheoEquivfCM}
		Let \((R, \mfrakm, k)\) be a Noetherian local ring with \(p \in \mfrakm\), and let \(M\) be an \(R\)-module so that $\mfrakm M\neq M$.
		Then the following are equivalent:
		\begin{enumerate}
			\item \(M\) is Cohen--Macaulay.
			\item There exists an \(R\)-algebra \(A\) satisfying one of the conditions in \Cref{GeneralPerfectoidCondition} such that \(\Ext_R^i(A, M) = 0\) for all \(i < \dim(M)\).
		\end{enumerate}
	\end{corollary}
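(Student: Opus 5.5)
The plan is to run the proof of \Cref{TheoEquivCM} again, verbatim wherever possible. That proof never used finite generation of \(M\), except through the inequality \(\depth(M) \le \dim(M)\). The notion of Cohen--Macaulay here is that of \Cref{DefCMModule}: \(\egrade_R(M) = \dim(M)\). So the whole argument reduces to two things: \(\Ext\)-vanishing below \(\dim(M)\), and the inequality \(\egrade_R(M) \le \dim(M)\) for modules with \(\mfrakm M \neq M\).

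For (1) \(\Rightarrow\) (2), take \(A = k_{\perf}\). As explained at the start of the proof of \Cref{TheoEquivCM}, it satisfies condition \ref{ConditionGeneralPerfd} with \(f_0 = 0\), and \(\mfrakm A = 0 \neq A\). Since \(k_{\perf} \cong \bigoplus_\Lambda k\) as \(R\)-modules and \(\Ext_R^i(-,M)\) turns direct sums into products, we get
\[
\Ext_R^i(k_{\perf}, M) \cong \prod_\Lambda \Ext_R^i(k, M).
\]
This vanishes for \(i < \egrade_R(M) = \dim(M)\).

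For (2) \(\Rightarrow\) (1), reduce to \(A\) satisfying \ref{ConditionGeneralPerfd} and set \(D = 2(\dim R + 1)\). For any \(s < \dim(M) - D\), the indices \(s, \dots, s+D\) all lie below \(\dim(M)\), so \Cref{CorVanishing} gives \(\Ext_R^{s+D}(k,M) = 0\). This holds for every such \(s\), so \(\Ext_R^i(k,M) = 0\) for all \(i < \dim(M)\), i.e. \(\egrade_R(M) \ge \dim(M)\). If \(\dim(M) = \infty\) this would read \(\Ext_R^i(k,M)=0\) for all \(i\); but \(\dim(M) \le \dim R < \infty\) since \(\operatorname{Supp} M \subseteq \operatorname{Spec} R\), so this case does not occur.

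The main obstacle is the reverse inequality \(\egrade_R(M) \le \dim(M)\) for a possibly non-finitely generated \(M\) with \(\mfrakm M \neq M\).
\begin{enumerate}
\item Pass to \(\bar R = R/\operatorname{ann}\)-type data is unavailable, so instead choose elements \(x_1, \dots, x_t \in \mfrakm\) with \(t = \dim(M)\) such that \(\mfrakm\) is the radical of \((x_1,\dots,x_t)\) up to the primes in \(\operatorname{Supp} M\). Concretely, pick them avoiding minimal primes of the closed set \(\operatorname{Supp}\) in the usual prime-avoidance way; if \(\operatorname{Supp}M\) is not closed, use its closure \(V(\mathfrak a)\) with \(\dim R/\mathfrak a = \dim M\).
\item Using \cite[Proposition ~(iii)]{asgharzadeh2009Notion} as in the proof of \Cref{CMcomparison}, compute \(\egrade\) via Koszul cohomology. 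Since \(M\) is supported in \(V(\mathfrak a)\), Koszul cohomology on \(x_1,\dots,x_t\) together with generators of \(\mathfrak a\) gives the same \(\Ext\)-grade; the elements of \(\mathfrak a\) act nilpotently-locally, which should let us discard them.
\item Then \(H^{t}(\Hom_R(\Kos(\underline{x};R),M)) \cong M/(\underline{x})M\), and \(M/(\underline x)M \supseteq\) a nonzero image of \(M/\mfrakm M\)-type quotient, so it is nonzero because \(\mfrakm M \neq M\). Hence \(\egrade_R(M) \le t = \dim M\).
\end{enumerate}

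If the reduction in step 2 is delicate, the fallback is to cite the known inequality \(\egrade_R(M) \le \dim(M)\) whenever \(\mfrakm M \ne M\), from \cite{asgharzadeh2009Notion}. Combining both inequalities gives \(\egrade_R(M) = \dim(M)\), i.e. \(M\) is Cohen--Macaulay.
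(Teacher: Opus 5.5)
Your argument is the paper's. You rerun the proof of \Cref{TheoEquivCM} with $\egrade_R(M)$ in place of depth, and you close with the inequality $\egrade_R(M)\le\dim(M)$ for $\mfrakm M\neq M$, which the paper simply cites from \cite[Corollary 2.5]{asgharzadeh2009Notion}. That citation is exactly your fallback, and both directions as you write them are fine.

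You should drop the self-contained route in steps 1--3, though, and it breaks already at step 1, not step 2. For non-finitely generated $M$ the support is only specialization-closed. Take $R=\setF_p[[x,y]]$ and $M=\bigoplus_{\operatorname{ht}\mathfrak q=1}R/\mathfrak q$. Then $\mfrakm M\neq M$ and $\operatorname{Supp}M=\operatorname{Spec}R\setminus\{(0)\}$, so $\dim(M)=1$. But the only ideal $\mathfrak a$ with $\operatorname{Supp}M\subseteq V(\mathfrak a)$ is $\mathfrak a=0$. Moreover, by Krull's principal ideal theorem every element of $\mfrakm$ lies in some height-one $\mathfrak q\in\operatorname{Supp}M$, so no single element cuts $\operatorname{Supp}M$ down to $\{\mfrakm\}$. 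Without that reduction, step 3 only gives $\egrade_R((\underline x),M)\le t$. Since $\egrade_R((\underline x),M)\le\egrade_R(\mfrakm,M)$, this bound points the wrong way.

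If you want to avoid the citation, here is a correct short argument.
\begin{enumerate}
\item For a system of parameters $\underline x$ of $R$, one has $H^{\dim R}(\Hom_R(\Kos(\underline x;R),M))\cong M/(\underline x)M\neq 0$. By the Koszul description used in \Cref{CMcomparison}, $g\defeq\egrade_R(M)<\infty$.
\item The integer $g$ is also the least $i$ with $H^i_{\mfrakm}(M)\neq 0$.
\item Grothendieck vanishing holds for arbitrary $M$: write $M$ as the directed union of its finitely generated submodules, each of dimension at most $\dim(M)$, and use that local cohomology commutes with direct limits. This forces $g\le\dim(M)$.
\end{enumerate}
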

	
	\begin{proof}
		This is similar to the finitely generated case (see \Cref{TheoEquivCM}),  with the understanding that the Ext-grade of \(M\) is less than or equal to \(\dim(M)\), see  \cite[Corollary 2.5]{asgharzadeh2009Notion}. 
	\end{proof}
	
	\begin{example} Adopt the notation of the preceding \Cref{TheoEquivfCM}.  The condition $\mfrakm M\neq M$ is important. Indeed, let $R\defeq  \mathbb{F}_p[[x]]$, and set $M\defeq \mathcal{Q}(R)$ be the fraction field of $R$. Clearly, $\Ext_R^i(k,M)=0$ for any $i\geq 0$. Since $M$ is 1-dimensional, it implies that  \(M\) is not Cohen--Macaulay. But, there is an \(R\)-algebra \(A\defeq k\) from \Cref{GeneralPerfectoidCondition} so that \(\Ext_R^i(A, M) = 0\) for all \(i < \dim(M)\).
	\end{example}


	\begin{corollary} \label{TheoEquivBCM}
		Let \((R, \mfrakm, k)\) be a Noetherian local ring with \(p \in \mfrakm\), and let \(M\) be a nonzero complete \(R\)-module.
		Then  \(M\) is a balanced big Cohen--Macaulay $R$-module if and only if there is an \(R\)-algebra \(A\) satisfying one of the conditions in \Cref{GeneralPerfectoidCondition} such that \(\Ext_R^i(A, M) = 0\) for all \(i < \dim(R)\).
	\end{corollary}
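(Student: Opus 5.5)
The plan is to reduce to \Cref{CMcomparison} and \Cref{TheoEquivfCM}. Since \(M\) is nonzero and complete, it is \(\mfrakm\)-adically separated, so \(\mfrakm M \neq M\); this is the first line of the proof of \Cref{CMcomparison}. Hence \Cref{TheoEquivfCM} applies to \(M\). By \Cref{CMcomparison}, \(M\) is balanced big Cohen--Macaulay if and only if \(\egrade_R(M) = \dim(M) = \dim(R)\). It therefore suffices to show that this condition is equivalent to the vanishing \(\Ext_R^i(A, M) = 0\) for all \(i < \dim(R)\), for some \(A\) satisfying \Cref{GeneralPerfectoidCondition}.

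For the forward direction, assume \(M\) is balanced big Cohen--Macaulay and take \(A \defeq k_{\perf}\), which satisfies condition~\ref{ConditionGeneralPerfd} as noted at the start of the proof of \Cref{TheoEquivCM}. By \Cref{CMcomparison}, \(\egrade_R(M) = \dim(R)\), so \(\Ext_R^i(k, M) = 0\) for \(i < \dim(R)\). Since \(k_{\perf} \cong k^{\oplus \Lambda}\), we get \(\Ext_R^i(k_{\perf}, M) \cong \prod_\Lambda \Ext_R^i(k, M) = 0\) for all \(i < \dim(R)\).

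For the converse, set \(D \defeq 2(\dim(R)+1)\). Apply \Cref{CorVanishing} with every \(s < \dim(R) - D\). The window \(s, \dots, s+D\) then lies below \(\dim(R)\), so the hypothesis gives the required vanishing, and we obtain \(\Ext_R^{s+D}(k, M) = 0\). Letting \(s\) vary, this yields \(\Ext_R^i(k, M) = 0\) for all \(i < \dim(R)\), that is, \(\egrade_R(M) \geq \dim(R)\). Combining with \(\egrade_R(M) \leq \dim(M)\) from \cite[Corollary 2.5]{asgharzadeh2009Notion} (valid because \(\mfrakm M \neq M\)) and the trivial bound \(\dim(M) \leq \dim(R)\), we conclude
\[
\dim(R) \leq \egrade_R(M) \leq \dim(M) \leq \dim(R).
\]
So all three quantities are equal, and \Cref{CMcomparison} gives that \(M\) is balanced big Cohen--Macaulay.

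The only delicate step is the converse. Vanishing below \(\dim(M)\), which is all \Cref{TheoEquivfCM} would use, is weaker than vanishing below \(\dim(R)\). The sandwich of inequalities above is what recovers \(\dim(M) = \dim(R)\), and this is exactly the maximality condition required in \Cref{CMcomparison}. I expect no further obstacle, since completeness is used only through \Cref{CMcomparison} and to guarantee \(\mfrakm M \neq M\).
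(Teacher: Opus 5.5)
Your argument is correct and follows the paper's route. The paper likewise only notes \(\mfrakm M \neq M\) from separatedness and combines \Cref{CMcomparison} with \Cref{TheoEquivfCM}; you additionally spell out the step it calls ``immediate'', namely rerunning the \Cref{CorVanishing} argument with \(\dim(R)\) in place of \(\dim(M)\) and using the chain \(\dim(R) \le \egrade_R(M) \le \dim(M) \le \dim(R)\) to obtain the maximality that \Cref{CMcomparison} requires.
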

	
	\begin{proof}
		Recall from $\bigcap_n \mfrakm^n M=0$ that  $\mfrakm M\neq M$. Using \Cref{CMcomparison} and \Cref{TheoEquivfCM}, the claim is now immediate.
	\end{proof}

	\begin{corollary} \label{CorEquivcan}
		Let \((R, \mfrakm, k)\) be a Cohen--Macaulay complete local ring with \(p \in \mfrakm\), and let \(M\) be a nonzero finitely generated torsion-free \(R\)-module.
		Then the following are equivalent:
		\begin{enumerate}
			\item \(M \cong \omega_R^{\oplus n}\) for some \(n \geq 1\).
			\item There exists an \(R\)-algebra \(A\) satisfying one of the conditions in \Cref{GeneralPerfectoidCondition} such that
			\[
			\Ext_R^i(A, M) = 0 \quad \text{for all } i \neq \dim(M).
			\]
		\end{enumerate}
	\end{corollary}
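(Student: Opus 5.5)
The plan is to separate the two halves of the vanishing condition: vanishing for \(i<\dim(M)\) gives Cohen--Macaulayness, and vanishing for \(i>\dim(M)\) gives finite injective dimension. A maximal Cohen--Macaulay module of finite injective dimension over a complete Cohen--Macaulay local ring is a direct sum of copies of \(\omega_R\).

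For (1) \(\Rightarrow\) (2), take \(A \defeq k_{\perf}\), which satisfies the conditions of \Cref{GeneralPerfectoidCondition}, as observed in the proof of \Cref{TheoEquivCM}. Since \(\omega_R\) is maximal Cohen--Macaulay of injective dimension \(d=\dim R\), we have \(\Ext_R^i(k,\omega_R)=0\) for all \(i\neq d\). The same holds for \(M=\omega_R^{\oplus n}\), and \(\dim M=d\). Writing \(k_{\perf}\cong\bigoplus_\Lambda k\), we get \(\Ext_R^i(k_{\perf},M)\cong\prod_\Lambda\Ext_R^i(k,M)=0\) for \(i\neq d\).

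For (2) \(\Rightarrow\) (1), we may assume \(A\) satisfies condition \ref{ConditionGeneralPerfd}. The steps are as follows.

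First, vanishing for \(i\gg0\) together with \Cref{TheoEquivGor} gives \(\idim_R(M)<\infty\).

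Second, vanishing for \(i<\dim(M)\) together with \Cref{TheoEquivCM} shows that \(M\) is Cohen--Macaulay.

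Third, we show \(\dim M=\dim R\). Here we use torsion-freeness: \(M\) is nonzero and torsion-free, so some associated prime of \(M\) is contained in the set of zero-divisors of \(R\), hence lies inside an associated prime of \(R\). Since \(R\) is Cohen--Macaulay, its associated primes are minimal, so \(\dim R/\mathfrak p=d\) for that prime. Therefore \(\dim M=d\), and \(M\) is maximal Cohen--Macaulay. (Alternatively, a nonzero finitely generated module of finite injective dimension has depth equal to \(\depth R=d\), which gives the same conclusion.)

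Finally, a maximal Cohen--Macaulay module of finite injective dimension over a complete Cohen--Macaulay local ring is isomorphic to \(\omega_R^{\oplus n}\), with \(n=\operatorname{rank}\) or \(n\) equal to the type; this is the standard result, e.g.\ \cite[Theorem 3.3.28]{bh} (Sharp's theorem). Note that \(n\ge 1\) since \(M\neq0\).

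The main obstacle is the bookkeeping around \(\dim(M)\). The hypothesis in (2) is phrased in terms of \(\dim(M)\), not \(\dim R\), so we must pin down \(\dim M=\dim R\) before invoking the characterization of \(\omega_R\). The finite-injective-dimension route handles this cleanly: for finitely generated \(M\neq0\) of finite injective dimension, Ischebeck's formula gives \(\depth M\le\idim M=\depth R=d\), and Cohen--Macaulayness gives \(\depth M=\dim M\). It therefore remains to see \(\dim M=d\), which follows from torsion-freeness as in the third step. Completeness of \(R\) is used only to guarantee that the canonical module \(\omega_R\) exists.
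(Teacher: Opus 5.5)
Your proof is correct and follows the paper's route: \Cref{TheoEquivCM} handles \(i<\dim(M)\), \Cref{TheoEquivGor} handles \(i\gg 0\), and \cite[Exercise~3.3.28~(a)]{bh} identifies \(M\) with \(\omega_R^{\oplus n}\). Your explicit choice \(A=k_{\perf}\) in (1)~\(\Rightarrow\)~(2) and your torsion-freeness argument for \(\dim(M)=\dim(R)\), which the paper only asserts, are welcome clarifications; but delete the parenthetical ``alternative'' in your third step, since finite injective dimension does not force \(\depth(M)=\depth(R)\) (take \(R=\setF_p[[x]]\) and \(M=R/xR\)), so torsion-freeness is genuinely needed there, as your final paragraph itself concedes.
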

	
	\begin{proof}
		Since \(R\) is complete, the canonical module exists. Assume (2). In particular, \(\Ext_R^i(A, M) = 0\) for all \(i < \dim(M)\), so \(M\) is maximal Cohen--Macaulay by \Cref{TheoEquivCM}, because $\dim(M)=\dim(R)$. Moreover, the vanishing \(\Ext_R^i(A, M) = 0\) for all sufficiently large \(i\) forces \(M\) to have finite injective dimension by \Cref{TheoEquivGor}. Therefore, \cite[Exercise~3.3.28~(a)]{bh} yields \(M \cong \omega_R^{\oplus n}\) for some \(n \geq 1\), proving (2)~\(\Rightarrow\)~(1).
		
		Conversely, set \(d  \defeq  \dim(\omega_R) = \depth(R)\). Since \(\idim_R(\omega_R) = d\), we have \(\Ext_R^i(A, \omega_R^{\oplus n}) \cong \Ext_R^i(A, \omega_R)^{\oplus n} = 0\) for all \(i > d\). The vanishing for \(i < d\) is precisely the content of \Cref{TheoEquivCM}. Hence (1)~\(\Rightarrow\)~(2) follows.
	\end{proof}

	Corollary \ref{CorEquivcan}~(2) gives no data on  $\Ext_R^{\dim(M)}(A, M)$.
	Here, we show its vanishing may depend on the Krull dimension of the ring.
	
	\begin{example}Adopt the notation of the preceding \Cref{CorEquivcan}, suppose \(R\) is of characteristic \(p\) and \(d \defeq \dim(R)\). The following holds.
		\begin{enumerate}
			\item One has \(d= \dim(\omega_R)\), and \(\Ext_R^d(R_{\perf}, \omega_R)\) vanishes if and only if \(d\) is positive.
			\item The torsion-free assumption on $M$ is needed. 
		\end{enumerate}
	\end{example}

	\begin{proof} (1) First, we assume $d>0$. Since \(R\) is Cohen--Macaulay, we have \(H^0_{\mfrakm}(R_{\perf}) = 0\) and the natural isomorphism
		\(H^0_{\mfrakm}(R_{\perf}) \cong \Tor_d^R(R_{\perf}, \omega_R^\vee)
		\)
		forces \(\Tor_d^R(R_{\perf}, \omega_R^\vee) = 0\). Dualizing gives $\Ext_R^d(R_{\perf}, \omega_R) \cong \Ext_R^d(R_{\perf}, \widehat{\omega_R}) \cong \Tor_d^R(R_{\perf}, \omega_R^\vee)^\vee = 0,$ see \cite[\S 4.1]{simon1990Homological}. Conversely, assume $d=0$. This may read as $\omega_R=E_R(k),$ and $R_{\perf}=k_{\perf}\cong \bigoplus_{\Lambda} k$. Consequently,
		\begin{equation*}
		\Ext_R^d(R_{\perf}, \omega_R)=\Hom_R(R_{\perf}, \omega_R)= \prod_{\Lambda}k^\vee\neq 0,
		\end{equation*}
		as desired.

		(2) Let $R\defeq  \mathbb{F}_p[[x]]$, and set $M\defeq R/xR$. 
		Since $R$ is regular, any module has finite injective dimension, so  \(
		\Ext_R^i(R_{\perf}, M) = 0 \quad \text{for all } i \gg 0.
		\)   Then by
		Corollary \ref{ExtAllVanishing} we see
		\(
		\Ext_R^i(R_{\perf}, M) = 0 \quad \text{for all } i \neq \dim(M)=0.
		\) Clearly, $M\ncong \omega_R^{\oplus n}$   for any \(n \geq 1\).
	\end{proof}

	\subsection{Regularity and injective dimensions}
	
	
	Finally, we can also give a characterization of regular local rings in terms of the \emph{injective dimension} of perfectoid algebras, whereas the previous results \cite[Theorem 4.7]{bhatt2019Regular} treat the \emph{projective dimension} of perfectoid algebras.
	In particular, the equivalence (1) \(\Leftrightarrow\) (4) in \Cref{FinInjDimPerfRegular} is a dual version of the previous result.
	
	\begin{theorem} \label{FinInjDimPerfRegular}
		Let \((R, \mfrakm, k)\) be a Noetherian local ring with \(p \in \mfrakm\). Then the following are equivalent:
		\begin{enumerate}
			\item \(R\) is regular.
			\item The perfection \((R/pR)_{\perf}\) has finite injective dimension as an \(R\)-module.
			\item There exists an \(R\)-algebra \(A\) satisfying one of the conditions in \Cref{GeneralPerfectoidCondition} such that \(A\) has finite injective dimension as an \(R\)-module.
			\item \(R\) has isolated singularity and there exists an \(R\)-algebra \(A\) satisfying one of the conditions in \Cref{GeneralPerfectoidCondition} such that \(\Ext^i_R(A, A) = 0\) for \(i \gg 0\).
		\end{enumerate}
	\end{theorem}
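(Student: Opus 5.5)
The plan is to establish the cycle (1) \(\Rightarrow\) (2) \(\Rightarrow\) (3) \(\Rightarrow\) (1) and then attach (4) to it. For (1) \(\Rightarrow\) (2), a regular local ring has finite global dimension \(\dim R\), so every \(R\)-module, in particular \((R/pR)_{\perf}\), has injective dimension at most \(\dim R\). For (2) \(\Rightarrow\) (3) we use that \((R/pR)_{\perf}\) satisfies condition \ref{ConditionGeneralPerfd} of \Cref{GeneralPerfectoidCondition}. It is perfect of characteristic \(p\), so we may take \(f_0 = 0\), and \(\mfrakm (R/pR)_{\perf} \neq (R/pR)_{\perf}\) because \(R/\mfrakm\) is a quotient of \(R/pR\) and the perfection of \(k\) is nonzero. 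Similarly, (1) \(\Rightarrow\) (4) is immediate: a regular ring has an isolated singularity, and \(\Ext^i_R(A,A)=0\) for \(i > \dim R\) for any \(A\).

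For (3) \(\Rightarrow\) (1) we first apply \Cref{TheoFinIdim} with \(I = \mfrakm\) to get \(\idim_R(A/\sqrt{\mfrakm A}) \le \idim_R(A) < \infty\). Now \(L \defeq A/\sqrt{\mfrakm A}\) is a nonzero \(k\)-vector space, so \(L \cong k^{\oplus \Lambda}\). For a Noetherian ring, a direct sum of modules has finite injective dimension if and only if the summands do, since \(\Ext^i_R(R/\mathfrak p, -)\) commutes with direct sums for finitely generated first arguments (cf. Baer's criterion). Hence \(\idim_R(k) < \infty\). Then \(\Ext^i_R(k,k) = 0\) for \(i \gg 0\), so \(k\) has finite projective dimension by Auslander--Buchsbaum--Serre, and \(R\) is regular. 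Equivalently, one can cite that a nonzero finitely generated module of finite length with finite injective dimension forces regularity via the symmetric Bass/Betti numbers of \(k\).

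The main work is (4) \(\Rightarrow\) (1). By \Cref{CorVanishing}(2), \(\Ext^i_R(A,A)=0\) for \(i\gg 0\) together with the isolated singularity hypothesis gives \(\idim_R(A) < \infty\). This is exactly (3), and the previous paragraph finishes the argument. The obstacle I expect is that \Cref{CorVanishing}(2) requires \(s \ge -d-2\), which holds for large \(s\), and that it is stated for an arbitrary module \(M\), here \(M = A\), which is neither finitely generated nor complete. This is precisely why the isolated singularity hypothesis appears. I would check that the cited rigidity results of Christensen and Schoutens indeed apply to arbitrary modules over an isolated singularity: once \(\Ext^i_R(k,A)=0\) for all \(i \ge s+2(d+1)\), we need \(\idim_R A < \infty\). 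If that citation is insufficient, I would instead pass directly from the vanishing of \(\Ext^{\gg0}_R(k,A)\) to \(\Ext^{\gg 0}_R(k, A/\sqrt{\mfrakm A})\) and then to \(k\).
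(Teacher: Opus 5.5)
Your proof is correct and is essentially the paper's proof. The trivial implications are handled the same way. (3)$\Rightarrow$(1) goes through \Cref{TheoFinIdim} with $I=\mfrakm$ and the decomposition $A/\sqrt{\mfrakm A}\cong k^{\oplus\Lambda}$; you go there directly, whereas the paper passes through $(A/\mfrakm A)_{\perf}\cong A/\sqrt{\mfrakm A}$. (4)$\Rightarrow$(3) is exactly \Cref{CorVanishing}(2) applied to $M=A$.

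Your fallback is also sound, and it actually gives a stronger result. The d\'evissage in the proof of \Cref{TheoFinIdim} only needs the single test module $M=k$, because $\Ext^i_R(k,-)$ commutes with filtered colimits. The main assertion of \Cref{CorVanishing} extracts $\Ext^{\gg 0}_R(k,A)=0$ from $\Ext^{\gg0}_R(A,A)=0$ with no hypothesis on the singular locus. That vanishing already gives $\Ext^{\gg 0}_R(k,k)=0$, so the isolated-singularity assumption in (4) can be dropped.
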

	
	\begin{proof}
		(1) \(\Rightarrow\) (2): If \(R\) is regular, then every \(R\)-module has finite injective dimension, and hence so does \((R/pR)_{\perf}\).
		
		(2) \(\Rightarrow\) (3): This implication is trivial, since \((R/pR)_{\perf}\) is perfectoid.
		
		(3) \(\Rightarrow\) (1): By \citeSta{0AVJ}, it suffices to show that \(R\Hom_R(k, k)\) has bounded cohomology.
		Since \((A/\mfrakm A)_{\perf}\) is a nonzero direct sum of copies of \(k\), it suffices to show the complex \(R\Hom_R(k, (A/\mfrakm A)_{\perf})\) has bounded cohomology.
		Since \(A/(f_0^{1/p^\infty})\) is assumed to be perfect, so is its reduced quotient \(A/\sqrt{\mfrakm A}\). Therefore, we have an isomorphism \((A/\mfrakm A)_{\perf} \cong A/\sqrt{\mfrakm A}\) of \(A\)-algebras, and in particular,
		\[
		R\Hom_R(k, (A/\mfrakm A)_{\perf}) \cong R\Hom_R(k, A/\sqrt{\mfrakm A}).
		\]
		Since \(A\) is assumed to have finite injective dimension as an \(R\)-module and we may assume \ref{ConditionGeneralPerfd} in \Cref{GeneralPerfectoidCondition} for \(A\), the boundedness of this object follows from \Cref{TheoFinIdim}.
		
		The implication (1) \(\Rightarrow\) (4) is trivial, so it remains to prove (4) \(\Rightarrow\) (3).
		Condition (4) implies \(\Ext^i_R(k, A) = 0\) for \(i \gg 0\) by \Cref{CorVanishing}.
		Since \(R\) is assumed to have isolated singularity, this vanishing implies that \(A\) has finite injective dimension as an \(R\)-module by \cite[Theorem VI.9]{schoutens2003Projectivea}.
	\end{proof}
	
	If such an \(R\)-algebra \(A\) exists and is a big Cohen--Macaulay \(R\)-algebra, then the injective dimension of \(A\) is strictly determined by the dimension of \(R\):
	\begin{corollary} \label{CorInjDim}
		Let \((R, \mfrakm, k)\) be a Noetherian local ring with \(p \in \mfrakm\), and let \(A\) be either \((R/pR)_{\perf}\) or an \(R\)-algebra satisfying one of the conditions in \Cref{GeneralPerfectoidCondition}.
		If \(A\) is big Cohen--Macaulay over \(R\) and \(\idim_R(A) < \infty\), then
		\(
		\idim_R(A) = \dim(R).
		\)
	\end{corollary}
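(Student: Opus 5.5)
By \Cref{FinInjDimPerfRegular} (whose argument for (3) \(\Rightarrow\) (1) applies verbatim to \((R/pR)_{\perf}\), a perfect ring and so an algebra satisfying condition \ref{ConditionGeneralPerfd} with \(f_0 = 0\)), the hypothesis \(\idim_R(A) < \infty\) forces \(R\) to be regular. In particular \(R\) is Cohen--Macaulay of dimension \(d \defeq \dim(R)\). The plan is to prove the two inequalities \(\idim_R(A) \le d\) and \(\idim_R(A) \ge d\) separately.

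\emph{Upper bound.} We use Bass's formula in its non-finitely-generated form, due to Chouinard: if \(\idim_R(A) < \infty\), then
\[
\idim_R(A) = \sup\{\depth R_{\mathfrak p} - \operatorname{width}_{R_{\mathfrak p}}(A_{\mathfrak p}) : \mathfrak p \in \operatorname{Spec} R\}.
\]
Here \(\operatorname{width}(A_{\mathfrak p}) = \inf\{i : \Tor_i^{R_{\mathfrak p}}(\kappa(\mathfrak p), A_{\mathfrak p}) \neq 0\} \ge 0\). Since \(\depth R_{\mathfrak p} \le d\) for every prime, this gives \(\idim_R(A) \le d\). A simpler alternative uses regularity directly: since \(R\) is regular of dimension \(d\), it has global dimension \(d\), so every \(R\)-module has injective dimension at most \(d\). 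Either route works, and I will take the latter.

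\emph{Lower bound.} This is where the big Cohen--Macaulay hypothesis enters. Since \(A\) is big Cohen--Macaulay, some system of parameters \(\underline{x}\) of \(R\) is a regular sequence on \(A\) with \(A/\underline{x}A \neq 0\). Hence \(H^i_{\mfrakm}(A) = 0\) for \(i < d\), and \(H^d_{\mfrakm}(A) \neq 0\). Nonvanishing of the top local cohomology holds because \(\underline{x}\) is \(A\)-regular, so the transition maps in \(H^d_{\mfrakm}(A) = \colim_t A/\underline{x}^t A\) are injective, and \(A/\underline{x}A \neq 0\).

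Now use \(H^i_{\mfrakm}(A) = \colim_t \Ext^i_R(R/\mfrakm^t, A)\) and suppose \(\idim_R(A) < d\). Then \(\Ext^d_R(-, A) = 0\), so \(H^d_{\mfrakm}(A) = 0\), a contradiction. Equivalently, one can compute \(H^d_{\mfrakm}\) by applying \(\Gamma_{\mfrakm}\) to an injective resolution of length \(< d\).

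I expect the only real subtlety to be that \(A\) need not be finitely generated, so the standard finite-module equality \(\idim = \depth R\) is unavailable. The argument sidesteps this by using global dimension for the upper bound and local cohomology for the lower bound, both of which are valid for arbitrary modules.
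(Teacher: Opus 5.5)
Your proposal is correct and is essentially the paper's proof: you get regularity of \(R\) from \Cref{FinInjDimPerfRegular}, the upper bound \(\idim_R(A)\le\dim(R)\) from finite global dimension, and the lower bound from a nonvanishing \(\Ext^{\dim(R)}_R(-,A)\) forced by the big Cohen--Macaulay property. The paper reads this off directly as \(\Ext^{\dim(R)}_R(k,A)\cong\Hom_R(k,A/\underline{x}A)\neq 0\); you instead pass through \(H^{\dim(R)}_{\mfrakm}(A)\neq 0\), as the paper does in \Cref{embcm}, which is equally valid, though the injectivity of the transition maps for a non-finitely generated \(A\) deserves a one-line justification or citation.
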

	
	\begin{proof}
		By \Cref{FinInjDimPerfRegular}, \(R\) is regular, and therefore \(\idim_R(A) \leq \dim(R)\).
		On the other hand, since \(A\) is a big Cohen--Macaulay \(R\)-algebra, we have \(\Ext_R^i(k, A) = 0\) for \(i < \dim(R)\), but not for \(i = \dim(R)\).
		By \citeSta{0A5T}, this implies that \(\dim(R) \le \idim_R(A)\).
		Combining the two inequalities yields \(\idim_R(A) = \dim(R)\).
	\end{proof}
	For example, we can compute the injective dimension of the perfect closure and the absolute integral closure of a Noetherian local domain of characteristic \(p\).
	
	\begin{example}\label{epa}
		Let \(R\) be a Noetherian local ring with \(p \in \mfrakm\). From \Cref{FinInjDimPerfRegular} and \Cref{CorInjDim}, we have the following assertions:
		\begin{enumerate}
			\item If \(R\) is of characteristic \(p\) and \(\idim_R(R_{\perf})\) is finite, then \(R\) is regular and \(\idim_R(R_{\perf}) = \dim(R)\). Indeed, the finiteness implies that \(R\) is regular, and then \(R \to R_{\perf}\) is faithfully flat over \(R\) and is a big Cohen--Macaulay \(R\)-algebra.
			\item Assume \(R\) is an excellent integral domain and let \(R^+\) denote its absolute integral closure. Similarly, if \(\idim_R(\widehat{R^+})\) is finite for the \(p\)-adic completion \(\widehat{R^+}\) of \(R^+\), then \(R\) is regular and \(\idim_R(\widehat{R^+}) = \dim(R)\). Indeed, the \(p\)-adic completion of the absolute integral closure \(R^+\) is a big Cohen--Macaulay \(R\)-algebra as explained in the proof of \Cref{TorAllVanishing}.
		\end{enumerate}
	\end{example}
	
	\begin{corollary}\label{embcm}
		Let \(R\) be a complete local domain, and let \(A\) be a perfectoid \(R\)-algebra of finite injective dimension. Suppose \(A\) maps to a big Cohen--Macaulay algebra. Then
		\(
		\idim_R(A) = \dim(R).
		\)
	\end{corollary}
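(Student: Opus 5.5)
Let \(d \defeq \dim(R)\). By \Cref{FinInjDimPerfRegular} (applied to the perfectoid algebra \(A\), which has \(\mfrakm A \neq A\) because it maps to a big Cohen--Macaulay algebra \(B\) with \(\mfrakm B \neq B\)), the finiteness of \(\idim_R(A)\) forces \(R\) to be regular. Hence \(\idim_R(A) \le \dim(R) = d\) by the standard bound for modules over regular local rings (global dimension \(d\)). So the whole task is the reverse inequality \(\idim_R(A) \ge d\). Since \(R\) is regular and \(A\) need not be Cohen--Macaulay itself, I cannot simply copy \Cref{CorInjDim}. Instead I aim to show \(\Ext^d_R(k, A) \neq 0\) or, more robustly, \(\Ext^d_R(N, A) \neq 0\) for some finitely generated \(N\). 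By \citeSta{0A5T}, either conclusion gives \(\idim_R(A) \ge d\).

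The natural choice is local-cohomological. Since \(R\) is regular local of dimension \(d\) with regular system of parameters \(\underline{x}\), we have \(H^d_{\mfrakm}(A) \cong \colim_t \Ext^d_R(R/(\underline{x}^t), A)\). Here \(R/(\underline{x}^t)\) is finitely generated and its Koszul complex is a free resolution, so \(\Ext^d_R(R/(\underline{x}^t),A) \cong A/(\underline{x}^t)A\). Thus it suffices to prove \(H^d_{\mfrakm}(A) \neq 0\). If it is nonzero, then some \(\Ext^d_R(R/(\underline{x}^t), A) \neq 0\), and hence \(\idim_R(A) \ge d\). In fact it is enough that some \(A/(\underline{x}^t)A \ne 0\), which is immediate from \(\mfrakm A \neq A\). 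The subtle point is only that the Koszul identification must land in exactly degree \(d\), and it does for a regular sequence in \(R\).

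If one prefers to go through \(H^d_{\mfrakm}(A)\) itself, as the introduction suggests, the plan is as follows. Use the map \(A \to B\) to a big Cohen--Macaulay algebra, where \(H^d_{\mfrakm}(B) \neq 0\). Right exactness of top local cohomology gives \(H^d_{\mfrakm}(A) \otimes_A B \cong H^d_{\mfrakm}(B) \ne 0\), hence \(H^d_{\mfrakm}(A)\neq 0\). Then the vanishing of \(H^i_{\mfrakm}(A)\) for \(i > \idim_R(A)\) gives the inequality.

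I expect the main obstacle to be bookkeeping rather than depth. One point is confirming that the \(\mfrakm A\neq A\) hypothesis of \Cref{FinInjDimPerfRegular} follows from the existence of \(B\). The other is ensuring that the cited inequality from \citeSta{0A5T} applies to the non-finitely-generated module \(A\) with a finitely generated test module. The completeness and domain hypotheses on \(R\) appear only in guaranteeing that big Cohen--Macaulay algebras behave as above, namely that \(H^d_{\mfrakm}(B)\neq0\).
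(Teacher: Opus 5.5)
Your proof is correct, but your main route to the lower bound differs from the paper's.

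The paper gets $\idim_R(A)\ge d$ from solid closure theory. Since $R$ is a complete local domain and $A$ maps to a big Cohen--Macaulay algebra, Hochster's results make $A$ solid. Hence $H^d_{\mfrakm}(A)\neq 0$, and some $\Ext^d_R(R/\mfrakm^n,A)\neq 0$. The upper bound comes, as in your argument, from the regularity of $R$ via \Cref{FinInjDimPerfRegular}.

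You instead use regularity first and compute $\Ext^d_R(R/(\underline{x}),A)\cong A/(\underline{x})A=A/\mfrakm A$ from the Koszul resolution. This is nonzero because $\mfrakm A\neq A$. Your route is more elementary and also more general. Completeness, the domain hypothesis and the big Cohen--Macaulay target serve only to guarantee $\mfrakm A\neq A$. So your argument yields $\idim_R(A)\in\{\dim(R),\infty\}$ for every $A$ as in \Cref{GeneralPerfectoidCondition}, and it would likewise remove the big Cohen--Macaulay hypothesis from \Cref{CorInjDim}.

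What the paper's route buys is that $H^d_{\mfrakm}(A)\neq 0$ is proved without knowing $R$ is regular. Your second route, $H^d_{\mfrakm}(A)\otimes_A B\cong H^d_{\mfrakm}(B)\neq 0$, recovers exactly this nonvanishing without solid closure.

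Two of your stated worries are moot.
\begin{itemize}
\item A single nonvanishing $\Ext^d_R(N,A)$ already gives $\idim_R(A)\ge d$ directly from the definition of injective dimension, with no need for \citeSta{0A5T}.
\item $H^d_{\mfrakm}(B)\neq 0$ holds for a big Cohen--Macaulay algebra over any Noetherian local ring. It equals $\colim_t B/(\underline{y}^t)B$ with injective transition maps, for a system of parameters $\underline{y}$ that is regular on $B$. So completeness and the domain hypothesis play no role there either.
\end{itemize}
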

	
	\begin{proof}
		Let \(d = \dim R\) and let \(\mfrakm\) denote the maximal ideal of \(R\).
		By \cite[Corollary~10.6]{hochster1994Solid}, \(A\) is solid. Therefore, by \cite[Corollary~2.4]{hochster1994Solid},
		\( 
		H^d_{\mfrakm}(A) \neq 0.
		\)
		By definition of local cohomology, this gives
		\( 
		\varinjlim_n \Ext^d_R(R/\mfrakm^n, A) \neq 0.
		\)
		In particular, \(\idim_R(A) \ge d\). The reverse inequality follows by arguing as in \Cref{CorInjDim}.
	\end{proof}

	
	
\end{document}